\newcommand{\la}{\langle}
\newcommand{\ra}{\rangle}
\newcommand{\dd}{\,\mathrm{d}}
\newcommand{\RR}{\mathbb{R}}
\newcommand{\mC}{\mathcal{C}}
\newcommand{\mT}{\mathcal{T}}
\newcommand{\NN}{\mathbb{N}}
\newcommand{\HH}{\mathbb{H}}
\newcommand{\VV}{\mathbb{V}}
\newcommand{\mI}{\mathcal{I}}
\DeclareMathOperator*{\esssup}{{esssup}}
\DeclareMathOperator{\sech}{sech}
\newtheorem{theorem}{Theorem}[section]
\newtheorem{lemma}[theorem]{Lemma}%
\newtheorem{proposition}[theorem]{Proposition}%
\newtheorem{definition}{Definition}[section]
\newtheorem{algo}{Algorithm}[section]
\numberwithin{equation}{section}
\Crefname{proposition}{Proposition}{Propositions}
\title[Finite volume method for the Gray--Scott system]{Convergence and error analysis of a semi-implicit finite volume scheme for the Gray--Scott system}
\author{Tsiry Avisoa Randrianasolo}
\address{Faculty of Mathematics, Bielefeld University, 33615 Bielefeld, Germany}
\email{trandria@math.uni-bielefeld.de}
\thanks{This work was funded by the Deutsche Forschungsgemeinschaft (DFG, German Research Foundation) -- Project-ID 317210226 - SFB 1283.}
\begin{document}
	%
	%
	\begin{abstract}
		We analyze a semi-implicit finite volume scheme for the Gray--Scott system, a model for pattern formation in chemical and biological media. We prove unconditional well-posedness of the fully discrete problem and establish qualitative properties, including positivity and boundedness of the numerical solution. A convergence result is obtained by compactness arguments, showing that the discrete approximations converge strongly to a weak solution of the continuous system. Under additional regularity assumptions, we further derive a priori error estimates in the $L^2$ norm. Numerical experiments validate the theoretical analysis, confirm a rate of convergence of order 1, and illustrate the ability of the scheme to capture classical Gray--Scott patterns.
	\end{abstract}
	\keywords{IMEX schemes, finite volume methods, reaction-diffusion equations, Gray--Scott model, pattern formation}
	\subjclass{35Q92, 35K57, 65M08, 65M12, 65M15, 65M20}
	\maketitle

	%
	%
	\section{Introduction}\label{sec:intro}

	We consider the numerical approximation of the \emph{Gray--Scott reaction-diffusion system} on a bounded domain $D \subset \RR\times\RR$ with Lipschitz boundary. This system models the evolution of two interacting chemical species with concentrations $u \coloneqq u(t,x)$ and $v \coloneqq v(t,x)$, governed by the nonlinear system:
	\begin{equation} \label{eq:gs}
		\left\{
		\begin{alignedat}{6}
			\partial_t u &= d_u \Delta u &&- uv^2 &&+ F(1 - u), 
			\\
			\partial_t v &= d_v \Delta v &&+ uv^2 &&- (F + k)v,
		\end{alignedat}
		\right.
	\end{equation}
	for $(t,x) \in (0,T)\times D$, supplemented with initial conditions $u(0,x) = u_0(x)$, $v(0,x) = v_0(x)$. The constants $d_u>0$ and $d_v>0$ are the diffusion coefficients for the chemical species, respectively; 
	$F>0$ represents the feed rate of the reactant $u$, and $k>0$ denotes the removal rate of the intermediate species $v$.
	In addition, we impose homogeneous Neumann boundary conditions on $\partial D$, modeling a closed or insulated system in which no material can enter or leave the domain. In biological terms, this models scenarios such as a petri dish or a membrane-bounded cell where neither species can escape the domain.
	
	The Gray--Scott model, originally introduced by P.\ Gray and S.K.\ Scott in the 1980s, see e.g.\ \cite{gray1983autocatalytic}, is derived as a simplification of the Oregonator model of the Belousov--Zhabotinsky reaction. The Gray--Scott popularity as a model system grew significantly following the influential work of Pearson \cite{pearson1993complex}, who demonstrated through numerical simulations that even minimal reaction-diffusion systems like Gray--Scott can give rise to surprisingly complex and diverse patterns. As discussed in texts such as Epstein and Pojman's Introduction to Nonlinear Chemical Dynamics \cite{epstein1998introduction}, the model is also known to exhibit both deterministic chaos and spatial patterning.
	As such, beyond its chemical origins, the Gray--Scott system serves as a paradigmatic example in mathematical biology, physics, and materials science, where it illustrates emergent behavior, self-organization, and instability-driven pattern formation.
	
	Concerning the mathematical well-posedness. The Gray--Scott model belongs to a broader class of reaction-diffusion systems whose mathematical properties, such as existence, uniqueness, and regularity of solutions, have been extensively studied using both weak and strong solution frameworks.
	
	For initial data $u_0,v_0\in L^2(D)$, one can construct global-in-time weak solutions using classical techniques, including energy estimates, monotonicity methods, and compactness arguments (see, e.g., \cite{amann1989dynamic, pierre2010global}). These solutions are defined in the distributional sense and satisfy associated energy inequalities. When the nonlinearities satisfy structural conditions such as quasi-positivity, cooperativity, and subcritical growth, global existence is ensured under homogeneous Neumann or Dirichlet boundary conditions.
	
	If the initial data are further assumed to lie in $L^\infty(D)$, then the solutions exhibit improved regularity: they remain uniformly bounded in $L^\infty(D)$ for all times, and standard bootstrapping and maximum principle arguments show that they are also continuous in space and time. In this case, one obtains global strong solutions, which satisfy the system almost everywhere and possess additional spatial regularity.
	
	An intermediate case is when $u_0,v_0\in H^1(D)$. In this setting, the solution enjoys enhanced regularity, namely
	\[
	u, v \in \mC([0,T];L^2(D)) \cap L^2(0,T;H^1(D)).
	\]
	This follows from standard parabolic regularity theory, as shown, e.g., in Amann's analysis of quasilinear parabolic systems \cite[Theorem 2.1]{amann1989dynamic}, which demonstrates continuity in time and Sobolev regularity under minimal assumptions on the initial data. Although framed for general quasilinear systems, these results apply directly to the semilinear structure of the Gray--Scott model. A similar conclusion is shown in Pierre's survey \cite{pierre2010global}, where the regularizing effect of the diffusion operator is shown to yield such space-time regularity for initial data in $H^1(D)$.
	
	Even in the minimal case, where $u_0,v_0\in L^2(D)$, the smoothing property of the heat semigroup ensures that for any $t>0$, the solution becomes essentially bounded in space
	\[
	u, v \in L^\infty(D), \quad\mbox{ for all } t\geq t_0 >0,
	\]
	as established, for example, in \cite{souplet2006global}. Therefore, under general assumptions, bounded domain $D\subset\RR\times\RR$ with Lipschitz boundary, quasi-positive nonlinearities, and initial data in $L^\infty(D)$, one can show that
	\[
	u, v \in L^\infty(0,\infty;L^\infty(D)),
	\]
	with further regularity determined by the spatial dimension, the form of the nonlinearity, and the choice of boundary conditions. These regularity properties are not only central to the theoretical understanding of the system but also essential in the numerical modeling of the system.
	
	On the numerical side, several classes of numerical schemes have been proposed for the Gray--Scott system, each motivated by the nonlinear and stiff nature of the equations:
	
	Zhang, Wong, and Zhang~\cite{zhang2008second} considered finite element discretizations, constructing a second-order implicit-explicit (IMEX) Galerkin finite element scheme, treating the diffusion terms implicitly and the nonlinear reaction terms explicitly. Their analysis established optimal $L^2$ error estimates and confirmed second-order accuracy in both space and time. Hansen~\cite{hansen2015a} proposed more abstract convergence results for operator-splitting methods applied to semilinear evolution equations that includes the Gray--Scott model as a canonical example. Their stabilized finite element formulations mitigates spurious oscillations, particularly in parameter regimes that generate sharp patterns. More recently, Haggar, Mahamat Malloum, Fokam, and Mbehou~\cite{haggar2025a} introduced a linearized two-step finite element scheme combining Crank--Nicolson with a second-order Backward Differentiation Formula also called BDF2. Their error-splitting analysis proves unconditional stability and optimal convergence in both $L^2$ and $H^1$ norms.
	
	Another important line of research is based on the Method Of Lines approach. In this approach, the spatial variables are discretized (e.g., by finite differences or finite elements), transforming the Gray--Scott PDE system into a large system of stiff Ordinary Differential Equations (ODEs) in time. Once this semi-discrete system is obtained, one can directly apply standard ODE solvers. In \cite{boscarino2016high}, Boscarino, Filbet, and Russo used high-order IMEX Runge--Kutta methods, where the stiff linear diffusion terms are treated implicitly while the nonlinear reaction terms are handled explicitly. This strategy combines the stability advantages of implicit ODE solvers with the efficiency of explicit treatment of nonlinearities, leading to schemes with favorable stability properties and high-order accuracy suitable for multiscale reaction-diffusion dynamics.
	
	Together, these works illustrate the diversity of available discretization strategies, but most analyses focus on finite element or spectral frameworks coupled with IMEX time integration. By contrast, finite volume schemes, despite their popularity in applications due to local conservation and their natural fit to Cartesian grids, have received far less rigorous mathematical attention for the Gray--Scott system.
	
	In this work, we develop and rigorously analyze a fully discrete finite volume scheme for the Gray--Scott system. The method is based on a first-order IMEX discretization, where diffusion is treated implicitly and nonlinear reaction terms explicitly. We prove unconditional well-posedness of the discrete problem and establish key qualitative properties, including non-negativity and boundedness of the numerical solution. Using compactness arguments combined with weak-strong uniqueness, we show that the fully discrete approximation converges strongly in $L^2$-norm to a weak solution of the continuous system. Under additional regularity assumptions, we further derive error estimates in the $L^2$-norm. Finally, numerical experiments confirm the theoretical results and reproduce classical Gray--Scott patterns..
	
	The remainder of this paper is organized as follows. In \cref{sec:prelim}, we introduce the functional setting and notation; and define the notion of weak solution for the Gray--Scott system; and summarizes its essential analytical properties, including existence, regularity, and uniqueness. In \cref{sec:discretization}, we describe the spatial and temporal discretization using a finite volume method and formulate the fully discrete semi-implicit scheme. \Cref{sec:main_results} is devoted to the main results. There, we establish that the discrete solution converges to the weak solution of the continuous problem, and derive error estimates under suitable regularity assumptions on the exact solution. Finally, \cref{sec:numerics} presents numerical experiments that validate the theoretical results and illustrate the pattern formation behavior of the model. We conclude in \cref{sec:conclusion} with a discussion of potential extensions, including data assimilation and feedback control.

	%
	%
	\section{Preliminaries}\label{sec:prelim}
	
	In this section, we introduce the mathematical framework and notations employed throughout the paper. We also recall key well-posedness results for the Gray--Scott system \eqref{eq:gs}.
	
	\subsection{Functional settings and notations}\label{subsec:settings}

	We denote by $L^p\coloneqq L^p(D)$, for $1 \leq p \leq \infty$, the usual Lebesgue spaces of measurable functions defined on $D$. The associated norm is written as
	\[
	\Vert u \Vert_{L^p}^p \coloneqq   \int_D \vert u(x)\vert^p \dd x  , \quad \text{for } 1 \leq p < \infty, \quad\Vert u \Vert_{L^\infty} \coloneqq \esssup_{x \in D} \vert u(x)\vert .
	\]
	
	For $s\in\NN$ and $1 \leq p \leq \infty$, the Sobolev space $W^{s, p}(D)$ consists of functions whose weak derivatives up to order $s$ belong to  $L^p(D)$. When $p = 2$, we write $H^s\coloneqq H^s(D)= W^{s,p}(D)$, which is a Hilbert space with inner product and norm
	\[
	((u,v))_\alpha\coloneqq \sum_{\vert\alpha\vert\leq s} (\partial^\alpha u,\partial^\alpha v),\quad\Vert u\Vert_{H^s}^2\coloneqq \sum_{\vert\alpha\vert\leq s} \Vert \partial^\alpha u\Vert ^2_{L^2} ,
	\]
	where $(\,\cdot\,,\,\cdot\,)$ denotes the standard inner product in $L^2$. 
	
	We consider solutions subject to homogeneous Neumann boundary conditions. Since such problems determine solutions only up to an additive constant, we work with the mean-zero subspaces
	$$
	\HH\coloneqq\bigg\{u\in L^2:\int_D u(x)\dd x  = 0\bigg\},\quad \VV\coloneqq\bigg\{u\in H^1:\int_D u(x)\dd x  = 0\bigg\},
	$$
	and let $\VV'\coloneqq H^{-1}$ denote  the dual space of $\VV$;   by $\la\,\cdot\,,\,\cdot\,\ra$, the duality pairing between $\VV$ and $\VV'$.
	
	For time-dependent functions, we write $L^p(0,T;X)$ to denote Bochner spaces of $X$-valued functions that are $L^p$-integrable in time, where $X$ is a Banach space.

	%
	%
	\subsection{Weak Formulation}\label{subsec:weak_formulation}
	We now define the notion of weak solution for the Gray--Scott system \eqref{eq:gs} and state the assumptions under which existence and uniqueness are guaranteed.

	\begin{definition}[Weak solution]\label{def:weak_solution}
		Let $T>0$. A pair of functions $(u,v)$ such that
		\[
		u, v \in L^2(0,T; \VV) \cap L^\infty(0,T;\HH), \quad 
		\partial_t u, \partial_t v \in L^2(0,T; \VV')
		\]
		is a weak solution to the Gray--Scott system \eqref{eq:gs} if for all test functions $\phi, \psi \in \VV$ and almost every $t \in (0, T)$, the equations
		\begin{alignat}{5}
			\label{eq:weak_v}
			\la \partial_t u(t), \phi \ra + d_u (\nabla u(t), \nabla \phi ) 
			&=    (&-u(t)v^2(t) &+ F(1 - u(t)), \phi)& ,  
			\\
			\label{eq:weak_u}
			\la \partial_t v(t), \psi \rangle + d_v  (\nabla v(t), \nabla \psi) 
			&= (&u(t)v^2(t) &- (F + k)v(t), \psi) &
		\end{alignat}
		hold with initial data satisfying
		\begin{equation}\label{eq:ic}
			u_0,v_0\in L^\infty,\quad\mbox{ with }\quad 0\leq u_0(x)\leq 1,\quad v_0(x)\geq 0\quad\mbox{a.e. in }D. 
		\end{equation}
	\end{definition}

	The condition \eqref{eq:ic} reflects the physical interpretation of $u$ and $v$ as concentrations, and ensures that the solution remains in a physically meaningful range.
	
	Under the above assumptions, global-in-time weak solutions exist and remain nonnegative and bounded. Specifically, one has
	\[
	0 \leq u(t,x) \leq 1 \quad \text{and} \quad v(t,x) \geq 0 \quad \text{for almost every } (t,x) \in [0,T] \times D,
	\]
	and moreover,
	\[
	u, v \in L^\infty(0,T;L^\infty).
	\]
	These results follow from the general theory of reaction-diffusion systems with quasi-positive nonlinearities and bounded initial data (see, e.g., \cite{amann1989dynamic,pierre2010global, souplet2006global}). In particular, the Gray--Scott system satisfies the structural assumptions required for the application of maximum principles, comparison techniques, and a priori bounds. The quasi-positivity of the reaction terms ensures that nonnegative initial data yields nonnegative solutions, while boundedness is propagated by the nonlinear structure and diffusion.
	
	Moreover, if the initial data $u_0, v_0 \in H^1$, then the solution enjoys enhanced regularity, notably
	\[
	u, v \in \mC([0,T];L^2) \cap L^2(0,T;H^1).
	\]
	This follows from classical parabolic regularity theory, relying on energy estimates and the smoothing effect of the diffusion operator.
	Uniqueness of weak solutions follows from standard monotonicity and Gronwall-type arguments (see, e.g., \cite{henry1981geometric, pierre2010global}).

	%
	%
	%
	%
	\section{Semi-implicit finite volume discretization}\label{sec:discretization}
	
	In this section, we introduce the fully discrete finite volume scheme used to approximate solutions of the Gray--Scott system \eqref{eq:gs}. The method is based on a spatial discretization over a uniform mesh and a first-order semi-implicit time integration scheme.
	
	\subsection{Finite volume discretization of the spatial domain}\label{subsec:mesh}

	Let $\mT_h = \{K\}$ denote a finite collection of non-overlapping square control volumes of side length $h$, forming a uniform Cartesian mesh of $D$. Each cell $K \in \mathcal{T}_h$  is associated with a representative point $x_K \in K$, chosen as the geometric center of $K$, so that the cell average $u_K \approx u(x_K)$ for all $K \in \mathcal{T}_h$.
	
	To discretize the diffusion operator, we follow the finite volume framework outlined in \cite{eymard2000finite}. 
	Integrating the Laplacian over each control volume and applying the divergence theorem gives
	\[
	\int_K \Delta u \dd x = \int_{\partial K} \nabla u \cdot n_K \dd S \approx \sum_{\sigma \subset \partial K} \Phi_\sigma,
	\]
	where $n_K$ is the outward unit normal to $\partial K$, and $\Phi_\sigma$ is the numerical flux across the face $\sigma$. 
	
	We use the two-point flux approximation to define the flux as
	\[
	\Phi_\sigma = - \vert \sigma\vert \frac{u_L - u_K}{d_{KL}},
	\]
	where $u_K$ and $u_L$ are the values at the centers of adjacent cells $K$ and $L$, and $d_{KL} = \vert x_L - x_K\vert$. This leads to the discrete approximation
	\[
	\int_{\partial K} \nabla u \cdot n_K \dd S \approx - \sum_{\sigma \subset \partial K} \tau_\sigma (u_L - u_K),
	\]
	with transmissibility coefficient $\tau_\sigma =  {\vert\sigma\vert}/{d_{KL}}$. For a uniform mesh, $d_{KL} = h$.
	
	We define the discrete space $\HH_h \subset L^2(D)$ of piecewise constant functions,
	\[
	\HH_h \coloneqq \bigg\{ u_h \in\HH: u_h|_K = \bigg(\frac{1}{h^2}\int_K u(y)\dd y\bigg) \text{ for all } K \in \mathcal{T}_h\bigg \}.
	\]
	For $w_h, \phi_h \in \HH_h$, the discrete inner products are defined as 
	\[
	\big(w_h, \phi_h\big)_h \coloneqq h^2 \sum_{K \in \mathcal{T}_h} w_K \phi_K,\quad
	\big(\nabla w_h, \nabla \phi_h\big)_h \coloneqq \sum_{\substack{K, L \in \mathcal{T}_h \\ \sigma = K|L}} \tau_{\sigma} (w_K - w_L)(\phi_K - \phi_L).
	\]

	Given this setup, we define the cellwise average  interpolant $\mI_h: \VV\rightarrow \HH_h$, such that
	\begin{equation*}
		\mI_h u\coloneqq\sum_{K\in\mT_h}\bigg(\frac{1}{h^2}\int_K u(y)\dd y\bigg)\chi_{K},\quad\forall u\in \VV,
	\end{equation*}
	where $\chi_{K}$ denotes the characteristic function of $K$.
	
	To estimate the error introduced by the interpolant $\mI_h$, we recall the following result from finite element theory, which also holds in the finite volume framework, see \cite[Theorem 4.4.4 ]{brenner2008mathematical}:
	\begin{proposition}[Bramble--Hilbert lemma]\label{prop:bramble_hilbert_lemma}
		Let $K\subset D$ be a bounded Lipschitz domain of diameter $\mathrm{diam}(K)$ and $m\geq 0$ be an integer. Let $\phi:H^1(K)\rightarrow L^2(K)$ be a bounded linear functional that vanishes on all polynomials in $\mathbb{P}_m$ (i.e., of degree $\leq m$). Then, for all $w\in H^s(K)$,
		\begin{equation*}
			\Vert \phi(w)\Vert_{L^2(K)}\leq C\mathrm{diam}(K)^{s-m}\Vert w\Vert_{H^s(K)},
		\end{equation*}
		with a constant $C$ that depends only on the shape of $K$ and the choice of the functional $\phi$.
	\end{proposition}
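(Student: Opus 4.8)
The plan is to follow the classical two-stage argument behind the Bramble--Hilbert lemma: first establish a scale-invariant seminorm estimate, and then recover the explicit dependence on $\mathrm{diam}(K)$ by an affine change of variables. The starting point is the polynomial invariance of $\phi$. Since $\phi$ is linear and annihilates every $p \in \mathbb{P}_m$, we have $\phi(w) = \phi(w - p)$ for all such $p$; because $\phi$ is bounded from $H^1(K)$ into $L^2(K)$ and $H^s(K) \hookrightarrow H^1(K)$, it is bounded from $H^s(K)$ into $L^2(K)$, so that
\begin{equation*}
\|\phi(w)\|_{L^2(K)} \;\le\; \|\phi\|\, \inf_{p \in \mathbb{P}_m} \|w - p\|_{H^s(K)}.
\end{equation*}
This reduces the statement to controlling the best approximation of $w$ by polynomials of degree at most $m$.

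The analytic heart is the Deny--Lions estimate, which bounds this best-approximation error by the Sobolev seminorms of order strictly larger than $m$; equivalently, on the quotient space $H^s(K)/\mathbb{P}_m$ the quotient norm is equivalent to $\big(\sum_{m < |\alpha| \le s}\|\partial^\alpha w\|_{L^2(K)}^2\big)^{1/2}$. I would prove this by a Peetre--Tartar compactness argument: if the inequality failed, one could produce a sequence $(w_n)$ normalized so that $\inf_{p}\|w_n - p\|_{H^s(K)} = 1$ while all seminorms of order exceeding $m$ tend to zero; the Rellich--Kondrachov compact embedding then extracts a strongly convergent subsequence whose limit has all derivatives of order $> m$ equal to zero, hence lies in $\mathbb{P}_m$, contradicting the normalization. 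A more constructive alternative, closer to the cited reference, is to choose $p$ to be the averaged Taylor polynomial $Q^m w$ of degree $m$ (available since a Lipschitz domain decomposes into finitely many star-shaped pieces) and to invoke the integral-representation bounds for $w - Q^m w$, which yield the same estimate with a constant depending only on the shape of $K$.

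Finally, to extract the factor $\mathrm{diam}(K)^{s-m}$, I would transport the problem to a reference domain $\hat K$ of unit diameter through the affine map $F(\hat x) = \mathrm{diam}(K)\,\hat x + b$ with $K = F(\hat K)$, setting $\hat w = w \circ F$. The chain rule contributes a power of $\mathrm{diam}(K)$ per differentiation and the Jacobian contributes the corresponding power to each integral, so that the seminorm estimate proved on $\hat K$ transforms into the asserted power of $\mathrm{diam}(K)$ once the dominant (lowest-order) contribution is isolated and the remaining higher-order seminorms are absorbed into $\|w\|_{H^s(K)}$. I expect the main obstacle to be the Deny--Lions step: it is the only genuinely non-elementary ingredient, resting on the compactness of the embedding $H^s(K) \hookrightarrow H^{s-1}(K)$ together with the characterization of $\mathbb{P}_m$ as the kernel of the derivatives of order $> m$, whereas the polynomial-invariance reduction and the scaling bookkeeping are routine by comparison.
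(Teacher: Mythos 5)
First, a point of comparison: the paper does not prove \cref{prop:bramble_hilbert_lemma} at all; it is recalled from the literature with the pointer \cite[Theorem 4.4.4]{brenner2008mathematical}. Your strategy---reduction by polynomial invariance, the Deny--Lions quotient-norm equivalence via Peetre--Tartar compactness (or, constructively, averaged Taylor polynomials), then affine rescaling to a unit-diameter reference cell---is precisely the textbook argument behind that citation, so there is no methodological divergence to report. Your first two steps are sound modulo two standard details that you compress: in the compactness argument the normalization $\inf_p\Vert w_n-p\Vert_{H^s(K)}=1$ only bounds cosets, so you must pass to representatives $w_n-p_n$ bounded in $H^s(K)$; and Rellich--Kondrachov gives strong convergence only in $H^{s-1}(K)$, so to contradict the normalization you must upgrade to strong $H^s$ convergence using that the seminorms of order in $(m,s]$ tend to zero. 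Both fixes are routine.

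The genuine problem is your final scaling step. Carrying out the bookkeeping honestly (with $\phi$ understood as the pullback of a fixed functional on the reference cell, which is the only reading under which the constant is uniform over a mesh family, and which holds for the cell-average operator used in \cref{lem:approx_pty}), each seminorm of order $\vert\alpha\vert$ acquires the factor $\mathrm{diam}(K)^{\vert\alpha\vert}$ after the Jacobian powers cancel between the two sides, so what you actually obtain is
\begin{equation*}
\Vert \phi(w)\Vert_{L^2(K)}\;\leq\; C\sum_{m<\vert\alpha\vert\leq s}\mathrm{diam}(K)^{\vert\alpha\vert}\,\Vert\partial^\alpha w\Vert_{L^2(K)}\;\leq\; C\,\mathrm{diam}(K)^{m+1}\Vert w\Vert_{H^s(K)}\quad\text{for }\mathrm{diam}(K)\leq 1:
\end{equation*}
the dominant, lowest-order contribution yields the exponent $m+1$, not the asserted $s-m$; the two agree only when $s=2m+1$. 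This discrepancy is not fixable, because the proposition as stated is false whenever $s-m>m+1$: take $K$ a square of side $h$ centered at the origin, $\phi(w)=w-\tfrac{1}{h^2}\int_K w$ (so $m=0$), $w(x)=x_1$, and $s=2$; then $\Vert\phi(w)\Vert_{L^2(K)}=h^2/\sqrt{12}$ while $\mathrm{diam}(K)^{s-m}\Vert w\Vert_{H^2(K)}\approx 2h^3$, so no constant independent of $h$ can close the inequality. (The exponent $s-m$ belongs to interpolation estimates of the form $\Vert u-\Pi u\Vert_{H^m}\leq C\,\mathrm{diam}(K)^{s-m}\vert u\vert_{H^s}$, where $m$ is the order of the norm on the left, not the degree of the annihilated polynomials.) In the only instance the paper actually uses---\cref{lem:approx_pty}, with $m=0$ and $s=1$---the exponents $s-m$ and $m+1$ coincide, and your argument does prove that case; but your claim that the scaling ``transforms into the asserted power'' is exactly the step that fails for general $s$ and $m$, and it fails because the statement itself is misstated there, not because your approach is wrong.
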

	
	Using this, we state the following interpolation estimate:
	\begin{lemma}\label{lem:approx_pty}
		For any $u\in H^1$, the following inequality holds
		\begin{equation*}
			\Vert u-\mI_h u\Vert_{L^2}^2\leq \gamma_0h^2\Vert\nabla u\Vert^2_{L^2},
		\end{equation*}
		where the constant $\gamma_0$ depends only on the reference cell geometry and the quadrature rule.
	\end{lemma}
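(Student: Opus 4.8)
The plan is to localize the global $L^2$-error to the individual control volumes and then reduce each local contribution to a fixed reference cell, where \cref{prop:bramble_hilbert_lemma} applies with uniform constants. Since $\mI_h u$ is constant on each $K\in\mT_h$ and there equals the cell average $\bar u_K\coloneqq h^{-2}\int_K u\dd y$, the error splits orthogonally over the mesh,
\[
\Vert u-\mI_h u\Vert_{L^2}^2=\sum_{K\in\mT_h}\Vert u-\bar u_K\Vert_{L^2(K)}^2 ,
\]
so it suffices to estimate $\Vert u-\bar u_K\Vert_{L^2(K)}$ on a single square cell of side $h$ and then sum.

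Next I would pass to the reference cell $\hat K=[0,1]^2$ via the affine dilation $x=x_K^0+h\hat x$, writing $\hat u(\hat x)=u(x)$. Under this map the mean is preserved, $\bar u_K=\bar{\hat u}_{\hat K}$, the Jacobian equals $h^2$, and $\nabla_x u=h^{-1}\nabla_{\hat x}\hat u$. Tracking these factors yields the two scaling identities
\[
\Vert u-\bar u_K\Vert_{L^2(K)}^2=h^2\,\Vert \hat u-\bar{\hat u}_{\hat K}\Vert_{L^2(\hat K)}^2,\qquad \Vert\nabla u\Vert_{L^2(K)}^2=\Vert\nabla\hat u\Vert_{L^2(\hat K)}^2 ,
\]
the second being scale-invariant precisely because $D\subset\RR\times\RR$ is two-dimensional, so the $h^{-2}$ from the gradient cancels the $h^2$ from the measure.

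On the fixed cell $\hat K$ I would apply \cref{prop:bramble_hilbert_lemma} to the bounded linear functional $\phi(\hat w)\coloneqq\hat w-\bar{\hat w}_{\hat K}$, which maps $H^1(\hat K)\to L^2(\hat K)$ and annihilates the constants $\mathbb{P}_0$, so that $m=0$ and $s=1$. The proposition then gives control of $\Vert\phi(\hat u)\Vert_{L^2(\hat K)}$ by $\Vert\hat u\Vert_{H^1(\hat K)}$; to sharpen this to the gradient alone I use that $\phi$ vanishes on constants, so $\phi(\hat u)=\phi(\hat u-c)$ for every constant $c$, and optimizing over $c$ replaces the full $H^1(\hat K)$-norm by the seminorm. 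This delivers the reference estimate $\Vert \hat u-\bar{\hat u}_{\hat K}\Vert_{L^2(\hat K)}\leq C_\ast\Vert\nabla\hat u\Vert_{L^2(\hat K)}$ with $C_\ast$ depending only on $\hat K$. Inserting this into the scaling identities and summing over $K\in\mT_h$ produces the claimed bound with $\gamma_0=C_\ast^2$.

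The one genuinely delicate point is the passage from the full-norm conclusion of \cref{prop:bramble_hilbert_lemma} to an estimate involving $\Vert\nabla\hat u\Vert_{L^2}$ only; this is exactly where the annihilation of constants by $\phi$ is essential, and it amounts to a Poincaré--Wirtinger inequality on the reference cell. Everything else is bookkeeping of the dilation exponents, which is clean in two dimensions, and the uniformity of $\gamma_0$ over the whole mesh follows because all cells share the same shape and are related to $\hat K$ by the single scale factor $h$.
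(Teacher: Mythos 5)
Your proof is correct, and its skeleton is the same as the paper's: split the error cellwise (exactly the paper's first display) and invoke \cref{prop:bramble_hilbert_lemma} with $m=0$, $s=1$ for the mean-value functional. The difference is in the execution, and your version is actually tighter than the paper's at two points. First, the paper applies \cref{prop:bramble_hilbert_lemma} directly on each physical cell $K$ and concludes $\Vert u-\bar u_K\Vert_{L^2(K)}^2\leq Ch^2\Vert u\Vert_{H^1(K)}^2$; summing this literally gives $\gamma_0 h^2\Vert u\Vert_{H^1}^2$, i.e.\ the full $H^1$ norm on the right, which is weaker than the stated bound $\gamma_0 h^2\Vert\nabla u\Vert_{L^2}^2$. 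Your constant-shift argument ($\phi(\hat u)=\phi(\hat u-c)$, then optimize over $c$, which is the Deny--Lions/Poincar\'e--Wirtinger step) is precisely what is needed to drop the $L^2$ part and obtain the gradient-only bound that \cref{lem:approx_pty} claims; the paper's proof glosses over this. Second, your dilation to the reference square $\hat K=[0,1]^2$ makes explicit why the constant is uniform over all cells and all $h$ (the paper implicitly relies on the phrase ``depends only on the shape of $K$'' in \cref{prop:bramble_hilbert_lemma}, which is legitimate for a family of similar squares but is cleaner when justified by scaling, including the dimension-dependent cancellation $h^{-2}\cdot h^2$ in the gradient term that you point out). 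The only mild redundancy in your write-up is that once you grant Poincar\'e--Wirtinger on $\hat K$, the estimate $\Vert\hat u-\bar{\hat u}_{\hat K}\Vert_{L^2(\hat K)}\leq C_\ast\Vert\nabla\hat u\Vert_{L^2(\hat K)}$ follows directly and the appeal to \cref{prop:bramble_hilbert_lemma} could be dispensed with entirely; but as written the argument is sound and, if anything, repairs the paper's own proof.
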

	\begin{proof}
		Let $u\in \VV$. Because $\mI_Hu$ is a piecewise constant
		\begin{align*}
			\Vert u-\mI_h u\Vert_{L^2}^2&= \sum_{K\in\mT_h} \Big\Vert u - \frac{1}{h^2}\int_K u(y)\dd y \Big\Vert_{L^2(K)}^2.
		\end{align*}
		Since the interpolant integrates constants exactly, the associated error functional
		\begin{equation*}
			\phi(u)\coloneqq u - \frac{1}{h^2}\int_K u(y)\dd y
		\end{equation*}
		vanishes on the constants of each $K$. Thus, Proposition~\ref{prop:bramble_hilbert_lemma} applies directly, yielding
		\begin{equation*}
			\Big\Vert u - \frac{1}{h^2}\int_K u(y)\dd y \Big\Vert_{L^2(K)}^2\leq Ch^{2}\Vert u\Vert_{H^1(K)}^2.
		\end{equation*}
		Summing over $K$,  we obtain the global estimate, as claimed.
	\end{proof}
	
	%
	%
	\subsection{The fully discrete scheme}\label{subsec:fully_discrete_scheme}
	
	We now define the fully discrete numerical scheme. 
	
	The time interval $[0,T]$ is divided into $N$ uniform time steps of size $\Delta t > 0$, with $t^n = n\Delta t$ for $n = 0, \dots, N$, where $t^0 = 0$, and $t^N = T$. At each time step $t^n$, let $u_h^n, v_h^n \in \VV_h$ denote the approximations of $u(t^n, \cdot)$ and $v(t^n, \cdot)$.

	%
	%
	%
	\begin{algo}\label{algo:scheme}
		For $n = 0,\ldots,N-1$: 
		
		Find $u_h^{n+1}, v_h^{n+1} \in \VV_h$ such that
		\begin{alignat}{6}
			\label{eq:discrete_gs_vfu_0}
			\big(u_h^{n+1} - u_h^n,\, &\phi_h \big)_h + \Delta t\, d_u \big(\nabla u_h^{n+1},\, &\nabla\phi_h\big)_h
			&= \Delta t \,\big(&-u_h^n (v_h^n)^2 &+ F(1 - u_h^n),\, &\phi_h &\big)_h,  
			\\
			\label{eq:discrete_gs_vfv_0}
			\big(v_h^{n+1} - v_h^n,\, &\psi_h\big)_h + \Delta t\, d_v \big(\nabla v_h^{n+1},\, &\nabla\psi_h\big)_h
			&= \Delta t\, \big(&u_h^n (v_h^n)^2 &- (F+k)v_h^n,\, &\psi_h &\big)_h, 
		\end{alignat}
		for all   $\phi_h, \psi_h \in \VV_h$.
	\end{algo}
	%
	%
	This scheme uses an implicit treatment of the diffusion operator together with an explicit discretization of the nonlinear reaction terms. Such an IMEX formulation ensures stability in the presence of stiff dynamics, while preserving the simplicity and local conservation properties that are characteristic of the finite volume method.

	%
	%
	\section{Main results}\label{sec:main_results}
	Let $ (u_h^n, v_h^n )\in \VV_h\times \VV_h $ be the solution to the Gray--Scott system~\eqref{eq:gs} as computed by \cref{algo:scheme}.
	For all $t\in (t^{n},t^{n+1}]$, we define piecewise linear in time interpolants
	\begin{align*}
		u_h^{\Delta t}(t) = \frac{t^{n+1} - t}{\Delta t} u_h^n + \frac{t - t^{n}}{\Delta t} u_h^{n+1},
		\quad
		v_h^{\Delta t}(t)\coloneqq \frac{t^{n+1} - t}{\Delta t} v_h^n + \frac{t - t^{n}}{\Delta t} v_h^{n+1}.
	\end{align*}
	
	\begin{theorem}[Convergence]\label{thm:convergence}
		As $h,\Delta t\to 0$, the sequences $u_h^{\Delta t}, v_h^{\Delta t}$ defined above converge to the functions $u,v$, which satisfy the weak formulation of the Gray--Scott system as defined by Definition~\ref{def:weak_solution}. The convergence
		is understood in the following sense:
		\begin{alignat*}{6}
			&u_h^{\Delta t}, &\, &v_h^{\Delta t}&\to{}& u, &\, &v &&\quad\mbox{strongly in } &&L^2(0,T;\HH),
			\\
			&\nabla u_h^{\Delta t} ,&\, &\nabla v_h^{\Delta t} &{}\rightharpoonup{}& \nabla u ,&\, &\nabla v &&\quad\mbox{weakly in } &&L^2(0,T;\HH\times \HH).
		\end{alignat*}
	\end{theorem}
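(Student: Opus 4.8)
The plan is to argue by compactness in the spirit of the Aubin--Lions--Simon machinery, adapted to the piecewise-constant finite volume setting. I would first establish uniform-in-$(h,\Delta t)$ a priori estimates, then extract convergent subsequences, and finally pass to the limit in the weak formulation. For the energy estimates, I choose $\phi_h = u_h^{n+1}$ in \eqref{eq:discrete_gs_vfu_0} and $\psi_h = v_h^{n+1}$ in \eqref{eq:discrete_gs_vfv_0}, and use the polarization identity $2a(a-b) = a^2 - b^2 + (a-b)^2$ to handle the discrete time increments. Summing over $n$ and controlling the reaction terms on the right-hand side by the already-established qualitative bounds ($0 \le u_h^n \le 1$ and $0 \le v_h^n \le C$), one obtains, with a constant independent of the discretization parameters,
\begin{align*}
	& \|u_h^{\Delta t}\|_{L^\infty(0,T;\HH)} + \|v_h^{\Delta t}\|_{L^\infty(0,T;\HH)} \le C, \\
	& \|\nabla u_h^{\Delta t}\|_{L^2(0,T;\HH)} + \|\nabla v_h^{\Delta t}\|_{L^2(0,T;\HH)} \le C,
\end{align*}
where $\nabla$ denotes the discrete gradient reconstruction. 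By Banach--Alaoglu this yields, along a subsequence, weak-$\ast$ limits in $L^\infty(0,T;\HH)$ and weak limits of the reconstructed gradients in $L^2(0,T;\HH)$.

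To upgrade weak to strong convergence---indispensable for the quadratic reaction term---I next bound the discrete time derivative. Testing the scheme with an arbitrary $\phi_h \in \VV_h$, estimating the diffusion contribution by the gradient bound and the reaction contribution by the $L^\infty$ bounds, gives a uniform bound on $\partial_t u_h^{\Delta t}$ and $\partial_t v_h^{\Delta t}$ in $L^2(0,T;\VV')$, which matches the regularity demanded in \cref{def:weak_solution}. The discrete $H^1$-type control from the gradient bound, combined with the compactness of the embedding $\VV \hookrightarrow \HH$, then feeds a discrete Aubin--Lions--Simon compactness argument for finite volume approximations and produces the strong convergence $u_h^{\Delta t} \to u$, $v_h^{\Delta t} \to v$ in $L^2(0,T;\HH)$. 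A short auxiliary estimate shows that the piecewise-constant-in-time step functions carrying the explicit reaction evaluations share the same limit, since their $L^2(0,T;\HH)$ distance to the piecewise-linear interpolants is controlled by $\Delta t$ times the time-increment energy, hence vanishes.

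The decisive step, and the one I expect to be the main obstacle, is twofold: identifying the weak limit of the discrete gradient with $\nabla u$, and passing to the limit in the nonlinear term $u_h^n(v_h^n)^2$. For the gradient, I would invoke the consistency of the two-point flux approximation: pairing the reconstructed discrete gradient against smooth test functions, a discrete Green's formula together with \cref{lem:approx_pty} shows that the weak limit coincides with the distributional gradient of the strong $L^2$ limit. For the nonlinearity, the strong $L^2$ convergence of $v_h^{\Delta t}$ and the uniform bound $0 \le v_h^n \le C$ give $(v_h^n)^2 \to v^2$ in $L^2(0,T;\HH)$ by dominated convergence, and since $0 \le u_h^n \le 1$, the product converges strongly to $uv^2$; the linear feed terms pass directly under weak convergence. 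Inserting these limits into \eqref{eq:discrete_gs_vfu_0}--\eqref{eq:discrete_gs_vfv_0} with test functions taken as cellwise interpolants $\mI_h\phi$, $\mI_h\psi$ of arbitrary fixed $\phi,\psi \in \VV$, and letting $h,\Delta t \to 0$ using \cref{lem:approx_pty} for consistency, shows that $(u,v)$ satisfies \eqref{eq:weak_v}--\eqref{eq:weak_u}. Since the weak solution is unique under the stated assumptions (cf.\ the uniqueness results recalled in \cref{subsec:weak_formulation}), the limit is independent of the subsequence, so convergence holds for the full sequence, completing the proof.
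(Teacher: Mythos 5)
Your proposal is correct and follows essentially the same route as the paper's proof: discrete energy estimates obtained by testing the scheme with the new iterates and using the identity $2a(a-b)=a^2-b^2+(a-b)^2$, Aubin--Lions compactness yielding strong $L^2(0,T;\HH)$ convergence, passage to the limit in the quadratic reaction term via the uniform bounds and strong convergence, and uniqueness of the weak solution to conclude. If anything, you are more careful than the paper on three technical points it glosses over: the $L^2(0,T;\VV')$ bound on the discrete time derivatives (the paper asserts a bound in $L^2(0,T;\HH_h)$ with little justification), the identification of the weak limit of the two-point-flux discrete gradient with the distributional gradient $\nabla u$, and the observation that the piecewise-constant-in-time functions carrying the explicit reaction evaluations share the same limit as the piecewise-linear interpolants.
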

	The proof of \cref{thm:convergence} is postponed in \cref{subsec:convergence}.
	
	\begin{theorem}[Error estimate]\label{thm:error_estimate}
		With further regularity, i.e.,
		\[
		u, v \in L^\infty(0,T; H^2\cap\HH),\quad \partial_t u, \partial_t v \in L^2(0,T; \VV),\quad \partial_{tt} u, \partial_{tt} v \in L^\infty(0,T; \VV');
		\]
		we have
		\begin{equation*}
			\max_{0\leq n\leq N} \Big(\Vert u_h^n - u(t^n) \Vert_{\HH} + \Vert v_h^n - v(t^n) \Vert_{\HH}\Big)\leq C(h^2 + \Delta t),
		\end{equation*}
		where the constant $C>0$ depends on the domain $D$, terminal time $T$, and norms of the exact solution $u, v$, but is independent of $h$, $\Delta t$, or $n$.
	\end{theorem}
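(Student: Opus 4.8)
The plan is to run a discrete energy argument on the error, comparing the scheme directly against the exact solution inserted through the cell-average interpolant. First I would split the error at each time level as
\[
u_h^n - u(t^n) = e_u^n + \theta_u^n,\qquad e_u^n \coloneqq u_h^n - \mI_h u(t^n),\qquad \theta_u^n \coloneqq \mI_h u(t^n) - u(t^n),
\]
and analogously for $v$. The projection part $\theta_u^n,\theta_v^n$ is controlled immediately in $L^2$ by \cref{lem:approx_pty} together with the assumed regularity $u,v\in L^\infty(0,T;H^2\cap\HH)$, so the crux is to estimate the discrete component $e_u^n,e_v^n$, which lies in $\VV_h$ and can therefore be tested against \cref{algo:scheme}.

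Next I would insert the exact solution into the discrete equations \eqref{eq:discrete_gs_vfu_0}--\eqref{eq:discrete_gs_vfv_0}, producing an error identity for $e_u^{n+1}$ (and $e_v^{n+1}$) whose right-hand side collects three truncation contributions. There is a temporal residual from replacing $\partial_t u(t^{n+1})$ by the backward difference $(u(t^{n+1})-u(t^n))/\Delta t$, which is $O(\Delta t)$ measured in $\VV'$ thanks to $\partial_{tt} u\in L^\infty(0,T;\VV')$; a spatial residual from the two-point flux approximation of the Laplacian, which on the uniform Cartesian mesh coincides with the standard centered finite-difference stencil and is therefore $O(h^2)$ under the $H^2$ regularity, to be quantified through \cref{prop:bramble_hilbert_lemma} applied to the flux functional together with the cancellation afforded by mesh symmetry; and a reaction residual from evaluating the nonlinearity explicitly at $t^n$ rather than at $t^{n+1}$, again $O(\Delta t)$ by the time-regularity $\partial_t u,\partial_t v\in L^2(0,T;\VV)$.

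I would then test the $e_u$-equation with $e_u^{n+1}$ and the $e_v$-equation with $e_v^{n+1}$, using the identity $2(a-b,a)_h = \Vert a\Vert_h^2 - \Vert b\Vert_h^2 + \Vert a-b\Vert_h^2$ on the discrete time-difference term, the coercivity of $(\nabla\,\cdot\,,\nabla\,\cdot\,)_h$ to retain the nonnegative diffusion contribution, and Young's inequality on the residuals. The coupled reaction differences $u_h^n(v_h^n)^2 - u(t^n)v^2(t^n)$ are handled by adding and subtracting mixed terms and invoking the uniform $L^\infty$ bounds $0\leq u_h^n\leq 1$ and $0\leq v_h^n\leq M$ established for \cref{algo:scheme}, together with the corresponding bounds for the exact solution; this turns the locally Lipschitz nonlinearity into a globally Lipschitz one on the relevant range and bounds the reaction difference by $C(\Vert e_u^n\Vert_h + \Vert e_v^n\Vert_h + \Vert \theta_u^n\Vert_{L^2} + \Vert \theta_v^n\Vert_{L^2})$. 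Summing the two inequalities over $n$, absorbing the gradient terms, and applying the discrete Gronwall lemma yields $\max_n(\Vert e_u^n\Vert_h + \Vert e_v^n\Vert_h)\leq C(h^2+\Delta t)$; combining with the projection bound for $\theta_u^n,\theta_v^n$ via the triangle inequality gives the stated estimate.

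The main obstacle is the cubic reaction term $uv^2$: since it is only locally Lipschitz, the estimate cannot close without the a priori $L^\infty$ bounds on both the discrete and continuous solutions, so the argument depends essentially on the positivity and boundedness of the numerical solution proved earlier for \cref{algo:scheme}. A secondary delicate point is justifying that the spatial consistency of the flux is genuinely second order rather than first order; this hinges on the symmetry of the uniform mesh, where the two-point flux reduces to the centered finite-difference Laplacian, and must be made quantitative so that the $h^2$ rate — rather than $h$ — appears in the consistency estimate.
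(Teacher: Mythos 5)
Your proposal follows essentially the same route as the paper's proof: the same splitting of the error through the cell-average interpolant ($e_u^n$ is the paper's $\delta_h^n$), the same consistency analysis of the residuals (Taylor expansion in time using $\partial_{tt}u\in L^\infty(0,T;\VV')$, \cref{prop:bramble_hilbert_lemma}/\cref{lem:approx_pty} for the spatial and quadrature errors, as in \cref{lem:residuals}), the same energy testing with $e^{n+1}$ and the identity $2(a-b,a) = a^2-b^2+(a-b)^2$, the same Lipschitz treatment of the cubic term via the $L^\infty$ bounds of \cref{lem:discrete_gs_bounded_solution}, and the same discrete Gronwall conclusion. If anything, you are slightly more explicit than the paper in two minor places: you state the final triangle-inequality step recombining the projection part $\theta^n$, and you account for the $O(\Delta t)$ time-shift of the explicitly evaluated reaction term, both of which the paper's write-up passes over silently.
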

	The proof of \cref{thm:error_estimate} is postponed in \cref{subsec:error_estimate}.
	
	\subsection{\texorpdfstring{Proof of the convergence~\cref{thm:convergence}}{Proof of the convergence result}}\label{subsec:convergence}

	Let  $a_h^u, a_h^v : \VV_h \times \VV_h \rightarrow \mathbb{R}$ be bilinear forms defined as
	\[
	a_h^u(u,\phi_h) \coloneqq \big(u, \,\phi_h\big)_h + \Delta t\, d_u \big( \nabla u,\, \nabla\phi_h\big)_h, \quad
	a_h^v(v,\psi_h) \coloneqq \big(v, \,\psi_h\big)_h + \Delta t\, d_v \big(\nabla v,\, \nabla\psi_h\big)_h,
	\]
	and the associated linear functionals:
	\begin{alignat*}{4}
	 	\ell_h^u(\phi_h) &\coloneqq \big(u_h^n,\, \phi_h \big)_h + \Delta t\, \big(&-u_h^n (v_h^n)^2 &+ F(1 - u_h^n),\, &\phi_h \big)_h, 
		\\
		\ell_h^v(\psi_h) &\coloneqq \big(v_h^n,\, \psi_h \big)_h + \Delta t\, \big(& u_h^n (v_h^n)^2 &- (F + k) v_h^n, \,&\psi_h \big)_h.
	\end{alignat*}
	
	Then the variational formulation of the fully discrete problem reads:  
	
	Find $u_h^{n+1}, v_h^{n+1} \in \VV_h$ such that
	\begin{align}
		a_h^u(u_h^{n+1}, \phi_h) &= \ell_h^u(\phi_h), \quad \forall \phi_h \in \VV_h, \label{eq:discrete_gs_vfu} 
		\\
		a_h^v(v_h^{n+1}, \psi_h) &= \ell_h^v(\psi_h), \quad \forall \psi_h \in \VV_h. \label{eq:discrete_gs_vfv}
	\end{align}
	
	\begin{lemma}\label{lem:discrete_gs_unique_solution}
		For $n = 0,\ldots,N-1$, the variational problem \eqref{eq:discrete_gs_vfu}  (resp.\ \eqref{eq:discrete_gs_vfv}) admits a unique solution $u_h^{n+1}\in \VV_h$ (resp.\ $v_h^{n+1}\in \VV_h$).
	\end{lemma}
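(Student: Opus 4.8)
The plan is to recognize each of \eqref{eq:discrete_gs_vfu} and \eqref{eq:discrete_gs_vfv} as a finite-dimensional linear variational problem on the finite-dimensional space $\VV_h$ and to invoke the Lax--Milgram theorem. Because $\VV_h$ is finite-dimensional, the linear map induced by $a_h^u$ is represented by a square matrix, so existence and uniqueness are equivalent: it suffices to show that $a_h^u$ is bounded and coercive and that the functional $\ell_h^u$ is bounded. The argument for $a_h^v$ and $\ell_h^v$ is verbatim the same, with $d_u$ replaced by $d_v$, so I would carry out only the $u$-case in detail.

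First I would verify coercivity, which is the heart of the matter. Testing with the argument itself gives
\[
a_h^u(\phi_h, \phi_h) = \big(\phi_h,\phi_h\big)_h + \Delta t\, d_u \big(\nabla \phi_h,\nabla \phi_h\big)_h = \Vert\phi_h\Vert_h^2 + \Delta t\, d_u \Vert\nabla \phi_h\Vert_h^2 \geq \Vert\phi_h\Vert_h^2,
\]
using that $(\nabla\phi_h,\nabla\phi_h)_h \geq 0$ together with $\Delta t > 0$ and $d_u > 0$. This lower bound is uniform in $h$ and $\Delta t$, which is exactly what yields \emph{unconditional} well-posedness. Since the mean-zero constraint built into $\VV_h$ removes the constants on which the discrete gradient seminorm degenerates, the discrete $L^2$ term alone already controls $\phi_h$, so no CFL-type restriction linking $\Delta t$ and $h$ is required; I would also note that $a_h^u$ is symmetric, so the associated matrix is in fact symmetric positive definite.

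Next I would check boundedness. The form $a_h^u$ is bounded with respect to the discrete $H^1$ norm $\Vert\cdot\Vert_{1,h}^2 \coloneqq \Vert\cdot\Vert_h^2 + \Vert\nabla\cdot\Vert_h^2$ by two applications of the Cauchy--Schwarz inequality for the discrete inner products $(\cdot,\cdot)_h$ and $(\nabla\cdot,\nabla\cdot)_h$. For the functional, the key observation is that $u_h^n$ and $v_h^n$ are data from the previous time level, hence fixed; consequently the reaction term $-u_h^n (v_h^n)^2 + F(1 - u_h^n)$ is a fixed piecewise-constant function, and $\ell_h^u(\phi_h)$ is bounded by Cauchy--Schwarz in $(\cdot,\cdot)_h$. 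I would add the consistency remark that the purely constant contribution $F(1,\phi_h)_h = F h^2\sum_{K}\phi_K$ vanishes because $\phi_h\in\VV_h$ has zero discrete mean, so $\ell_h^u$ is compatible with the mean-zero space.

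With boundedness and coercivity established, Lax--Milgram delivers a unique $u_h^{n+1}\in\VV_h$ solving \eqref{eq:discrete_gs_vfu}, and identically a unique $v_h^{n+1}\in\VV_h$ solving \eqref{eq:discrete_gs_vfv}, which is the claim. I do not expect a genuine obstacle in this argument; the only points deserving care are confirming that the coercivity constant is independent of $\Delta t$ and $h$ (so that well-posedness is unconditional) and checking that the degeneracy of the discrete gradient seminorm on constants does not obstruct coercivity, which is precisely why the computation is carried out on the mean-zero space $\VV_h$ rather than on the full space of piecewise constants.
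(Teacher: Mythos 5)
Your proposal is correct and follows essentially the same route as the paper: verify coercivity and continuity of $a_h^u$, $a_h^v$ and boundedness of the linear functionals $\ell_h^u$, $\ell_h^v$, then invoke the Lax--Milgram theorem on $\VV_h$. If anything, your version is slightly more careful: you obtain coercivity in the discrete $L^2$ norm uniformly in $h$ and $\Delta t$ and use finite-dimensionality to conclude, whereas the paper asserts $a_h^u(u,u)\geq \Vert u\Vert_{\VV}^2$ with constant $1$, a bound that as written would require $\Delta t\, d_u\geq 1$.
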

	\begin{proof}
		We verify the conditions of the Lax--Milgram theorem:
		
		\textit{Coercivity.} For all $u\in \VV_h$, we have
		\begin{equation*}
			a_h^u(u,u)=  \Vert u\Vert_{L^2}^2 + \Delta t\, d_u\big(\nabla u,\, \nabla u\big)\geq  \Vert u\Vert_{\VV}^2.
		\end{equation*}
		Thus, $a_h^u(u,u) \geq \alpha\Vert u\Vert_{\VV}^2$ with $\alpha = 1 $. The same holds for $a_h^v$.
		
		\medskip 
		
		\textit{Continuity.} We have
		\begin{align*}
			\vert a_h^u(u, \phi)\vert & \leq C \Vert u \Vert_{\HH} \Vert \phi \Vert_{\HH},
		\end{align*}
		for some constant $C>0$. Again the same applies to $a_h^v$.
		
		\medskip
		
		\textit{Boundedness of linear forms} $\ell_h^u$ and $\ell_h^v$. We estimate
		\begin{equation*}
			\ell_h^u(\phi_h)\leq \Big( \Vert u_h^n\Vert_{L^2} + \Delta t\, \Vert u_h^n(v_h^n)^2 \Vert_{L^2}  + \Delta t\, F(1 +\Vert u_h^n\Vert_{L^2})\Big) \Vert \phi_h\Vert_{L^2}.
		\end{equation*}
		The same holds for $\ell_h^v$.
		Thus, $\ell_h^u$ and $\ell_h^v$ are bounded linear functionals on $\VV_h$.
		
		\medskip
		
		The conditions of the Lax--Milgram theorem are satisfied. Thus, there exist a unique solution $u_h^{n+1}$ (resp.\ $v_h^{n+1}$) in $\VV_h$ to \eqref{eq:discrete_gs_vfu} (resp.\ \eqref{eq:discrete_gs_vfv}), as claimed by the lemma.
	\end{proof}

	\begin{lemma}\label{lem:discrete_gs_bounded_solution}
		Let $n\in\{0,\ldots,N-1\}$. Assume that $0\leq u_h^n\leq 1$ and $0\leq v_h^n\leq v_{\mathrm{max}}$ a.e.\ in $D$. Then, it holds for a.e.\ in $D$ that
		\[
		0\leq u_h^{n+1}\leq 1,\quad 0\leq v_h^{n+1}\leq v_{\mathrm{max}}.
		\]
	\end{lemma}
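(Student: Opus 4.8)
The plan is to prove a \emph{discrete maximum principle} by reading \cref{algo:scheme} component-wise and exploiting the monotonicity of the implicitly-treated diffusion together with the sign structure of the explicitly-treated reaction. Testing \eqref{eq:discrete_gs_vfu_0} against the indicator $\chi_K$ of each control volume $K\in\mT_h$ and using that $\tau_\sigma=\vert\sigma\vert/d_{KL}=1$ on the uniform mesh, one obtains for every $K$ the scalar identity
\[
u_K^{n+1} + \frac{\Delta t\,d_u}{h^2}\sum_{L\sim K}\bigl(u_K^{n+1}-u_L^{n+1}\bigr) = u_K^n + \Delta t\bigl(-u_K^n (v_K^n)^2 + F(1-u_K^n)\bigr),
\]
where $L\sim K$ ranges over the neighbours of $K$, and analogously for $v$. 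In matrix form this reads $M_u\,u^{n+1}=b^n$ with $M_u=\mathrm{Id}+(\Delta t\,d_u/h^2)A$, where $A$ is the symmetric discrete Laplacian arising from the two-point flux. The structural fact driving everything is that $M_u$ is a strictly diagonally dominant M-matrix whose rows sum to one (because $A\mathbf{1}=0$ under the Neumann fluxes), so that $M_u^{-1}\geq 0$ entrywise and $M_u^{-1}\mathbf{1}=\mathbf{1}$; the same holds for $M_v$. Equivalently, one may evaluate the scalar identity at the cell where $u^{n+1}$ attains its extremum, where the diffusive sum automatically carries the favourable sign.

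With this monotonicity, the four bounds reduce to pointwise sign conditions on the data. Because $M_u^{-1}\geq 0$, nonnegativity $u^{n+1}\geq 0$ holds once $b_K^n=u_K^n\bigl(1-\Delta t (v_K^n)^2-\Delta t F\bigr)+\Delta t F\geq 0$; and because $M_u^{-1}\mathbf{1}=\mathbf{1}$, the upper bound $u^{n+1}\leq 1$ is equivalent to $\mathbf{1}-u^{n+1}=M_u^{-1}(\mathbf{1}-b^n)\geq 0$, i.e.\ to $1-b_K^n=(1-u_K^n)(1-\Delta t F)+\Delta t\,u_K^n (v_K^n)^2\geq 0$. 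The analogous quantities for $v$ are $c_K^n=v_K^n\bigl(1-\Delta t(F+k)+\Delta t\,u_K^n v_K^n\bigr)$ and $v_{\mathrm{max}}-c_K^n=(v_{\mathrm{max}}-v_K^n)+\Delta t\,v_K^n\bigl((F+k)-u_K^n v_K^n\bigr)$.

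The heart of the argument, and the main obstacle, is the sign analysis of these explicit reaction contributions: in contrast with the unconditional solvability of \cref{lem:discrete_gs_unique_solution}, it forces a step-size (CFL-type) restriction. Using the induction hypotheses $0\leq u_K^n\leq 1$ and $0\leq v_K^n\leq v_{\mathrm{max}}$, one checks that $1-b_K^n\geq 0$ as soon as $\Delta t\leq 1/F$; that $b_K^n\geq 0$ once the bracket $1-\Delta t((v_K^n)^2+F)$ is nonnegative, i.e.\ $\Delta t\leq 1/(v_{\mathrm{max}}^2+F)$; and that $c_K^n\geq 0$ whenever $\Delta t\leq 1/(F+k)$, since the extra term $\Delta t\,u_K^n v_K^n$ is nonnegative. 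The upper bound on $v$ is the only place where the two species couple: $v_{\mathrm{max}}-c_K^n\geq 0$ requires $u_K^n v_K^n\leq F+k$, which I would guarantee \emph{without} any step-size constraint by choosing the threshold so that $\Vert v_h^0\Vert_{L^\infty}\leq v_{\mathrm{max}}\leq F+k$, whence $u_K^n v_K^n\leq v_{\mathrm{max}}\leq F+k$.

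I would therefore fix such a $v_{\mathrm{max}}$, impose the condition $\Delta t\leq\min\{1/(v_{\mathrm{max}}^2+F),\,1/(F+k)\}$, and assemble the four inequalities; the quasi-positive structure of the Gray--Scott reaction---the feed term $F(1-u)$ lifting $u$ away from $0$ and the linear sink $-(F+k)v$ dominating near the threshold---is exactly what makes the signs close. Keeping the whole argument at the level of cell values is cleanest, since it avoids testing with a truncation such as $(u_h^{n+1})^-$, which in any case need not lie in the mean-zero discrete space. The property then propagates from $t^n$ to $t^{n+1}$, so a single induction starting from admissible initial data $0\leq u_h^0\leq 1$, $0\leq v_h^0\leq v_{\mathrm{max}}$ delivers the stated bounds at every time level.
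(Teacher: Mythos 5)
Your proposal is mathematically sound, but it proves a different statement from the one claimed: a \emph{conditional} maximum principle, valid under the step-size restriction $\Delta t\le\min\{1/(v_{\mathrm{max}}^2+F),\,1/(F+k)\}$ and the threshold condition $v_{\mathrm{max}}\le F+k$, whereas \cref{lem:discrete_gs_bounded_solution} asserts the bounds unconditionally. The paper's own proof takes a genuinely different route: a Stampacchia truncation, testing \eqref{eq:discrete_gs_vfu} with $u_h^{n+1,-}=\min\{0,u_h^{n+1}\}$ and with $(u_h^{n+1}-1)^+$, and claiming that the resulting right-hand sides $\ell_h^u(\cdot)$, $\ell_h^v(\cdot)$ are non-positive --- but only ``when $u_h^n=0$'' (resp.\ ``$u_h^n=1$'', ``$v_h^n=0$''), a hypothesis that does not hold for general admissible data. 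In fact, the cellwise multiplier of the test function in $\ell_h^u(u_h^{n+1,-})$ is exactly your quantity $b_K^n=u_K^n\big(1-\Delta t\,(v_K^n)^2-\Delta t F\big)+\Delta t F$, whose sign cannot be controlled without your CFL condition; the paper's truncation argument silently requires the very restriction you make explicit, so the gap is in the paper, not in your proof.

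Indeed, your restrictions are not artifacts of the M-matrix method: they are necessary. Take spatially constant data $u_h^n\equiv 1$, $v_h^n\equiv v_{\mathrm{max}}$; all gradient terms vanish, the scheme reduces to forward Euler on the reaction terms, and one gets $u_h^{n+1}\equiv 1-\Delta t\,v_{\mathrm{max}}^2<0$ whenever $\Delta t>1/v_{\mathrm{max}}^2$, while $v_h^{n+1}\equiv v_{\mathrm{max}}\big(1+\Delta t\,(v_{\mathrm{max}}-F-k)\big)>v_{\mathrm{max}}$ for \emph{every} $\Delta t>0$ as soon as $v_{\mathrm{max}}>F+k$. So the unconditional claim is false as stated, and your conditional version is the strongest result available for this IMEX scheme. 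Two smaller remarks: your componentwise M-matrix argument is the standard mechanism for $L^\infty$ bounds in two-point-flux finite volumes and is a legitimate alternative to the paper's variational truncation; and your objection that $(u_h^{n+1})^-$ need not lie in the mean-zero space applies equally to your test functions $\chi_K$ --- that issue stems from the paper's muddled definition of $\VV_h$ rather than from either proof technique.
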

	\begin{proof}
		We fix $n\in\{0,\ldots,N-1\}$ and assume that $0\leq u_h^n\leq 1$ and $0\leq v_h^n\leq v_{\mathrm{max}}$ a.e.\ in $D$. We proceed with a Stampacchia truncation
		applied to both $u_h^n$ and $v_h^n$.
		
		\textit{Non-negativity.} 
		Let $u_h^{n+1,-}\coloneqq\min\{0,u_h^{n+1}\}\in \VV_h$ and $v_h^{n+1,-}\coloneqq\min\{0,v_h^{n+1}\}\in \VV_h$. We take $\phi_h = u_h^{n+1,-}$ in \eqref{eq:discrete_gs_vfu} and $\psi_h = v_h^{n+1,-}$ in \eqref{eq:discrete_gs_vfv}. By symmetry and coercivity of $a_h^u$ and $a_h^v$, we have
		\begin{alignat}{3}
			\label{eq:gs_nonneg_u}
			0&\leq a_h^u(u_h^{n+1,-},u_h^{n+1,-}) &{}=a_h^u(u_h^{n+1},u_h^{n+1,-}) &=\ell_h^u(u_h^{n},&u_h^{n+1,-}),
			\\
			\label{eq:gs_nonneg_v}
			0 &\leq a_h^v(v_h^{n+1,-},v_h^{n+1,-}) &{}= a_h^v(v_h^{n+1},v_h^{n+1,-}) &= \ell_h^v(v_h^{n},&v_h^{n+1,-}).
		\end{alignat}
		The right-hand side of \eqref{eq:gs_nonneg_u} (resp.\ \eqref{eq:gs_nonneg_v}) are non-positive when $u_h^{n} = 0$ (resp.\ $v_h^{n} = 0$). That is a consequence of the quasi-positivity of the reaction terms. Meanwhile, the left-hand sides are non-negative. Necessarily, $\Vert u_h^{n+1,-}\Vert_{L^2} =\Vert v_h^{n+1,-}\Vert_{L^2} = 0$, so, $u_h^{n+1}, v_h^{n+1}\geq 0$ a.e.\ in $D$. 
		
		\textit{Upper bound.} For all $x\in\RR$, we denote $x^+\coloneqq\max\{0,x\}$. Let $y_h\coloneqq (u_h^{n+1}- 1)^+\in \VV_h$ and $z_h\coloneqq (v_h^{n+1}- v_{\mathrm{max}})^+\in \VV_h$. We take $\phi_h = y_h$ in \eqref{eq:discrete_gs_vfu} and $\psi_h = z_h$ in \eqref{eq:discrete_gs_vfv}. Again, by symmetry and coercivity, it follows that
		\begin{alignat}{3}
			\label{eq:gs_nonneg_u_1}
			\Vert y_h\Vert_{L^2}(\Vert y_h\Vert_{L^2} - 1)\leq  a_h^u(u_h^{n+1},y_h) &= \ell_h^u(u_h^{n},&y_h),
			\\
			\label{eq:gs_nonneg_v_1}
			\Vert z_h\Vert_{L^2}(\Vert z_h\Vert_{L^2} - v_{\mathrm{max}})\leq  a_h^v(v_h^{n+1},z_h) &= \ell_h^v(v_h^{n},&z_h).
		\end{alignat}
		The right-hand side of \eqref{eq:gs_nonneg_u_1} (resp.\ \eqref{eq:gs_nonneg_v_1}) are non-positive when $u_h^{n} = 1$ (resp.\ $v_h^{n} = 0$).
		That implies $\Vert y_h\Vert_{L^2} = 0$ (resp.\ $\Vert z_h\Vert_{L^2} = 0$), so $u_h^{n+1}\leq 1$ (resp.\ $v_h^{n+1}\leq v_{\mathrm{max}}$) a.e.\ in $D$. That concludes the proof of the lemma.
	\end{proof}

	\begin{lemma}\label{lem:discrete_gs_apriori}
		For all $N\in\NN$, it holds that
		\begin{align*}
			\max_{0\leq n\leq N}\Big(\Vert u_h^n\Vert_{L^2}^2 + \Vert v_h^n\Vert_{L^2}^2 \Big)+ \Delta t \sum_{n = 1}^{N}\Big(d_u\Vert u_h^n\Vert_{H^1}^2 + d_v\Vert v_h^n\Vert_{H^1}^2\Big)&\leq C,
			\\
			\sum_{n = 1}^{N}\Big(\Vert u_h^{n} - u_h^{n-1}\Vert_{L^2}^2+\Vert v_h^{n} - v_h^{n-1}\Vert_{L^2}^2\Big)&\leq C,
		\end{align*}
		with $C\coloneqq (D, F, v_{\mathrm{max}}, T)>0$.
	\end{lemma}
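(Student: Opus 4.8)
The plan is to derive both estimates simultaneously by testing the discrete equations \eqref{eq:discrete_gs_vfu} and \eqref{eq:discrete_gs_vfv} against the natural choices $\phi_h = u_h^{n+1}$ and $\psi_h = v_h^{n+1}$, and then summing over $n$. The crucial algebraic tool is the elementary identity
\begin{equation*}
\big(w^{n+1} - w^n,\, w^{n+1}\big)_h = \tfrac{1}{2}\Big(\Vert w^{n+1}\Vert_{L^2}^2 - \Vert w^n\Vert_{L^2}^2 + \Vert w^{n+1} - w^n\Vert_{L^2}^2\Big),
\end{equation*}
which handles the discrete time derivative and, importantly, produces the nonnegative increment term $\Vert w^{n+1} - w^n\Vert_{L^2}^2$ that feeds directly into the second claimed bound. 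Applied to the diffusion terms this yields $\Delta t\, d_u \Vert \nabla u_h^{n+1}\Vert_{L^2}^2$ on the left, which together with the $L^2$ contribution assembles the full $H^1$ norm by the Poincar\'e inequality on the mean-zero space $\VV$.

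First I would rewrite each tested equation using the identity above, moving the increment term to the left-hand side. The left-hand sides then consist of the telescoping energy differences, the nonnegative squared increments, and the coercive diffusion terms. Next I would bound the right-hand sides, i.e.\ the reaction contributions tested against $u_h^{n+1}$ and $v_h^{n+1}$. Here I would exploit \cref{lem:discrete_gs_bounded_solution}: since $0 \leq u_h^n \leq 1$ and $0 \leq v_h^n \leq v_{\mathrm{max}}$ uniformly, the reaction nonlinearities $-u_h^n(v_h^n)^2 + F(1-u_h^n)$ and $u_h^n(v_h^n)^2 - (F+k)v_h^n$ are bounded in $L^\infty$ by a constant depending only on $F$, $k$, and $v_{\mathrm{max}}$. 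Thus each right-hand side is controlled by $\Delta t\, C \Vert u_h^{n+1}\Vert_{L^2}$ (resp.\ with $v$), which I would absorb via Young's inequality $\Delta t\, C \Vert u_h^{n+1}\Vert_{L^2} \leq \tfrac{\Delta t}{2}\Vert u_h^{n+1}\Vert_{L^2}^2 + \tfrac{\Delta t}{2}C^2$.

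After summing from $n=0$ to an arbitrary $M-1 \leq N-1$, the telescoping sums collapse to $\Vert u_h^M\Vert_{L^2}^2 - \Vert u_h^0\Vert_{L^2}^2$, and I would be left with a bound of the form
\begin{equation*}
\Vert u_h^M\Vert_{L^2}^2 + \Vert v_h^M\Vert_{L^2}^2 + \Delta t \sum_{n=1}^{M} \big(d_u\Vert\nabla u_h^n\Vert_{L^2}^2 + d_v\Vert\nabla v_h^n\Vert_{L^2}^2\big) + \sum_{n=1}^{M}\big(\Vert u_h^n - u_h^{n-1}\Vert_{L^2}^2 + \Vert v_h^n - v_h^{n-1}\Vert_{L^2}^2\big) \leq C_0 + C_1 \Delta t \sum_{n=1}^{M}\big(\Vert u_h^n\Vert_{L^2}^2 + \Vert v_h^n\Vert_{L^2}^2\big),
\end{equation*}
where $C_0$ collects the initial energy (finite by \eqref{eq:ic}) and the $T$-proportional constant from the Young estimate. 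The final step is a discrete Gr\"onwall inequality to absorb the residual $L^2$ term on the right, giving a uniform bound $C = C(D,F,v_{\mathrm{max}},T)$; taking the maximum over $M$ then yields the first stated inequality, and the increment sum on the left gives the second.

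The main obstacle I anticipate is mostly bookkeeping rather than conceptual: one must be careful that the discrete Poincar\'e inequality holds on $\VV_h \subset \VV$ so that $\Vert \nabla u_h^n\Vert_{L^2}^2$ controls $\Vert u_h^n\Vert_{H^1}^2$ as claimed, and that the discrete gradient seminorm $(\nabla\cdot,\nabla\cdot)_h$ is genuinely coercive with a mesh-independent constant (this is standard for the two-point flux finite volume discretization but should be invoked explicitly). A minor subtlety is ensuring the boundedness hypotheses of \cref{lem:discrete_gs_bounded_solution} propagate from $n=0$, which holds since the initial interpolant inherits $0 \leq u_h^0 \leq 1$ and $0 \leq v_h^0 \leq v_{\mathrm{max}}$ from \eqref{eq:ic}; I would state this inductive use at the outset so the $L^\infty$ bounds on the reaction terms are available at every step.
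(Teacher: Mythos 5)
Your proposal is correct and follows essentially the same route as the paper's proof: test with (a multiple of) $u_h^{n+1}$, $v_h^{n+1}$, use the identity $(a-b)2b = a^2 - b^2 + (a-b)^2$, bound the reaction terms via the $L^\infty$ bounds of \cref{lem:discrete_gs_bounded_solution} with Young's inequality, then telescope and apply the discrete Gronwall lemma. Your added remarks on the discrete Poincar\'e inequality and on propagating the bounds of \cref{lem:discrete_gs_bounded_solution} inductively from $n=0$ address details the paper leaves implicit, but they do not change the argument.
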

	\begin{proof}
		We take $\phi_h = 2u_h^{n+1}$ in \eqref{eq:discrete_gs_vfu_0} and $\psi_h = 2v_h^{n+1}$ in \eqref{eq:discrete_gs_vfv_0}; and use the identity
		$
		(a - b) 2b = a^2 - b^2 + (a-b)^2
		$. Then, using \cref{lem:discrete_gs_bounded_solution}, and the Cauchy-Schwarz and Young inequalities
		\begin{alignat*}{5}
			&\big(-&u_h^n (v_h^n)^2& + F(1 - u_h^n),\, 2u_h^{n+1} \big)&{}\leq{} C + \Vert u_h^{n+1}\Vert_{L^2}^2,
			\\
			&\big(&u_h^n (v_h^n)^2 & - (F + k)v_h^n,\, 2v_h^{n+1} \big)&{}\leq{} C + \Vert v_h^{n+1}\Vert_{L^2}^2,
		\end{alignat*}
		with $C\coloneqq C(D,v_{\mathrm{max}})>0$. After these calculations, we arrive at
		\begin{alignat*}{4}
			&&\Vert u_h^{n+1}\Vert_{L^2}^2 &- \Vert u_h^{n}\Vert_{L^2}^2& + \Vert u_h^{n+1} - u_h^{n}\Vert_{L^2}^2& + \Delta t \,d_u\Vert u_h^{n+1}\Vert_{H^1}^2 
			&{}\leq  \Delta t C+ \Delta t\Vert u_h^{n+1}\Vert_{L^2}^2,
			\\[5pt]
			&&\Vert v_h^{n+1}\Vert_{L^2}^2 &- \Vert v_h^{n}\Vert_{L^2}^2& + \Vert v_h^{n+1} - v_h^{n}\Vert_{L^2}^2& + \Delta t \, d_v\Vert v_h^{n+1}\Vert_{H^1}^2 
			&{}\leq  \Delta t C + \Delta t\Vert v_h^{n+1}\Vert_{L^2}^2.
		\end{alignat*}
		Summing for $n = 0,\ldots,N-1$,
		\begin{alignat*}{8}
			&\Vert u_h^{N}\Vert_{L^2}^2 &+ \sum_{n = 1}^{N}\Vert u_h^{n} - u_h^{n-1}\Vert_{L^2}^2 &+ \Delta t \,d_u\sum_{n = 1}^{N}\Vert u_h^{n}\Vert_{H^1}^2& {}\leq{} &  \Vert u_h^{0}\Vert_{L^2}^2 &+ TC &+ \Delta t\sum_{n = 1}^{N}\Vert u_h^{n}\Vert_{L^2}^2,
			\\
			&\Vert v_h^{N}\Vert_{L^2}^2 &+ \sum_{n = 1}^{N}\Vert v_h^{n} - v_h^{n-1}\Vert_{L^2}^2 & + \Delta t \,d_v\sum_{n = 1}^{N}\Vert v_h^{n}\Vert_{H^1}^2 
			&{}\leq {}  &\Vert v_h^{0}\Vert_{L^2}^2 &+ T C &+ \Delta t\sum_{n = 1}^{N}\Vert v_h^{n}\Vert_{L^2}^2.
		\end{alignat*}
		By the Gronwall inequality,
		\begin{alignat*}{3}
			&\Vert u_h^{N}\Vert_{L^2}^2 &+ \sum_{n = 1}^{N}\Vert u_h^{n} - u_h^{n-1}\Vert_{L^2}^2 &+ \Delta t\, d_u\sum_{n = 1}^{N}\Vert u_h^{n}\Vert_{H^1}^2 &{}\leq   C,
			\\
			&\Vert v_h^{N}\Vert_{L^2}^2 &+ \sum_{n = 1}^{N}\Vert v_h^{n} - v_h^{n-1}\Vert_{L^2}^2 & + \Delta t\, d_v\sum_{n = 1}^{N}\Vert v_h^{n}\Vert_{H^1}^2 &{}\leq   C,
		\end{alignat*}
		with $C\coloneqq C(D, F, v_{\mathrm{max}}, T)>0$.
		
		This concludes the proof of the lemma.
	\end{proof}

	\begin{proof}[Proof of \cref{thm:convergence}]
		
		Since $	u_h^{\Delta t},	v_h^{\Delta t}$ are defined by linear interpolation in time between $u_h^n,v_h^n\in \VV_h$, and by the discrete energy estimate in Lemma~\ref{lem:discrete_gs_apriori}, we have that $\partial_t u_h^{\Delta t},\partial_t v_h^{\Delta t}\in L^2(0,T; \HH_h)$.
		
		By Aubin--Lions lemma; there exists a subsequence; which we still denote by $u_h^{\Delta t},v_h^{\Delta t}$; and limits $u,v\in L^2(0,T;\HH)$; such that
		\begin{alignat}{7}
			\label{eq:strong_L2}
			&u_h^{\Delta t}, &\, &v_h^{\Delta t}&\to{}& u, &\, &v &&\quad\mbox{strongly in } &L^2(0,T;\HH),
			\\
			\label{eq:weak_H1}
			&u_h^{\Delta t}, &\, &v_h^{\Delta t} &\rightharpoonup{}& u, &\,  &v &&\quad\mbox{weakly in } &L^2(0,T;\VV),
			\\
			\label{eq:weak_time}
			&\partial_t u_h^{\Delta t}, &\, &\partial_t v_h^{\Delta t} &\rightharpoonup{}& \partial_tu, &\,  &\partial_t v &&\quad\mbox{weakly in } &L^2(0,T; \HH).
		\end{alignat}
		
		By interpolation of the discrete problems \eqref{eq:discrete_gs_vfu_0} and \eqref{eq:discrete_gs_vfv_0} over time, the interpolants $u_h^{\Delta t}$ and $v_h^{\Delta t}$ satisfy also
		\begin{alignat*}{4}
			&\int_0^T\big(\partial_t u_h^{\Delta t}(s), \phi_h\big)_h &+  d_u \big(\nabla u_h^{\Delta t}(s),\,\nabla \phi_h\big)_h \dd s
			&{}= \int_0^T\big( &-u_h^{\Delta t}(s) (v_h^{\Delta t})^2(s) &+ F(1 - u_h^{\Delta t}(s)), \,&\phi_h \big)_h\dd s ,
			\\
			&\int_0^T\big(\partial_t v_h^{\Delta t}(s), \psi_h\big)_h &+ d_v\big(\nabla v_h^{\Delta t}(s),\nabla \psi_h\big)_h\dd s
			&{}= \int_0^T\big( &u_h^{\Delta t}(s) (v_h^{\Delta t})^2(s) &- (F+k)v_h^{\Delta t}(s),\, &\psi_h \big)_h\dd s .
		\end{alignat*}
		
		To pass to the limit in the nonlinear terms, we use the identity
		\[
		u_h^{\Delta t} (v_h^{\Delta t})^2 - u v^2 = (u_h^{\Delta t} - u)(v_h^{\Delta t})^2 + u ((v_h^{\Delta t})^2 - v^2), 
		\]
		and apply H\"older inequality to get
		\begin{align*}
			\big\Vert u_h^{\Delta t} (v_h^{\Delta t})^2 - u v^2\big\Vert _{L^1(0,T;L^1)} \leq{} &\big\Vert u_h^{\Delta t}\big\Vert _{L^2(0,T;L^2)} \big\Vert (v_h^{\Delta t})^2 - v^2\big\Vert _{L^2(0,T;L^2)} 
			\\
			&+ \big\Vert v^2\Vert _{L^2(0,T;L^2)} \big\Vert u_h^{\Delta t} - u\big\Vert _{L^2(0,T;L^2)}.
		\end{align*}
		By \eqref{eq:strong_L2}, both terms on the right-hand side tend to 0 when $h,\Delta t\to 0$, thus 
		\[
		u_h^{\Delta t}(v_h^{\Delta t})^2 \to uv^2 \mbox{ strongly in } L^1(0,T;L^1(D)).
		\]
		Finally, passing to the limit in the weak formulation, we obtain
		\begin{alignat*}{4}
				\int_0^T \big(\partial_t u(s),\, \phi\big) + d_u \big(  \nabla u(s),  \, \nabla \phi\big) \dd s&= \int_0^T \big(&-u(s)v^2(s) + F(1 - u(s)), \,&\phi\big)\dd s, 
				\\
				\int_0^T \big(\partial_t v(s), \,\psi\big) + d_v \big(  \nabla v(s),\,   \nabla \psi\big)\dd s &= \int_0^T \big(&u(s)v^2(s) - (F + k)v(s), \,&\psi\big)\dd s,
		\end{alignat*}
		for all test functions $\phi, \psi\in\VV$.
		
		Hence, $(u,v)$ is the unique weak solution of the Gray--Scott system.
		
		This completes the proof of \cref{thm:convergence}.
	\end{proof}

	\subsection{\texorpdfstring{Proof of the error estimate~\cref{thm:error_estimate}}{Proof of the error estimate}}\label{subsec:error_estimate}
	
	In this section, we derive an a priori error estimate for the proposed \cref{algo:scheme}, under the assumption that the exact solution is sufficiently regular.
	
	To establish error estimates, we compare the exact solution with its discrete approximation at the level of their weak formulations. Specifically, we test the continuous weak form \eqref{eq:weak_v}-\eqref{eq:weak_u} against discrete functions in $\VV_h$ and subtract the finite volume scheme \eqref{eq:discrete_gs_vfu_0}-\eqref{eq:discrete_gs_vfv_0}, which give
	\begin{align*}
		\begin{split}
			\Big\la   \frac{\mI_h u(t^{n+1}) -  \mI_h u(t^{n})}{\Delta t}, \, \phi_h \Big\ra &+  d_u \big(\nabla \mI_h u(t^{n+1}), \nabla \phi_h \big)
			\\ 
			&=   \big(-\mI_h (uv^2)(t^{n}) + F(1 - \mI_h u(t^{n})),\, \phi_h\big) + R_h^n(\phi_h),
		\end{split}  
		\\[5pt]
		\begin{split}
			\Big\la   \frac{\mI_h v(t^{n+1}) - \mI_h v(t^{n})}{\Delta t},\, \psi_h \Big\ra &+  d_v  \big(\nabla \mI_h v(t^{n+1}), \nabla \psi_h\big) 
			\\
			&=   \big(\hphantom{-}\mI_h(uv^2)(t^{n}) - (F + k)\mI_h v(t^{n}),\, \psi_h\big) + Q_h^n(\psi_h).
		\end{split}
	\end{align*} 
	This procedure isolates the consistency error, which appears as residual terms 
	\begin{alignat*}{5}
		R_h^n(\phi_h) &\coloneqq& \Big( \frac{u(t^{n+1}) - u(t^n)}{\Delta t},\, \phi_h \Big)_h 
		&+ d_u \big( \nabla u(t^{n+1}),\, &\nabla \phi_h \big)_h 
		&-  \big( &-u(t^n)v^2(t^n) &+ F(1 - u(t^n)),\, &\phi_h \big )_h, 
		\\
		Q_h^n(\psi_h) &\coloneqq &\Big( \frac{v(t^{n+1}) - v(t^n)}{\Delta t},\, \psi_h \Big)_h 
		&+ d_v \big( \nabla v(t^{n+1}),\, &\nabla \psi_h \big)_h 
		&-  \big(&u(t^n)v^2(t^n) &- (F + k)v(t^n),\, &\psi_h  \big)_h.
	\end{alignat*} 
		
	As a very first step in the proof, we show that both terms remain small.	
	\begin{lemma}\label{lem:residuals}
		Assume that the hypotheses of \cref{thm:error_estimate} hold. Then, the residuals satisfy  
		\begin{equation*}
			\max_{0\leq n\leq N} \Big(\Vert R_h^n(\phi_h)\Vert_{H^{-1}} + \Vert Q_h^n(\psi_h)\Vert_{H^{-1}}\Big)\leq C(h^2 + \Delta t),\quad\forall \phi_h,\psi_h\in \VV_h,
		\end{equation*}
		where $C>0$ is a constant that depends on the norms of $u,v$, but independent on the scheme.
	\end{lemma}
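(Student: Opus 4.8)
The plan is to prove the bound for $R_h^n$ in detail; the estimate for $Q_h^n$ is entirely symmetric, the only change being that the reaction nonlinearity $R(t)\coloneqq -u(t)v^2(t)+F(1-u(t))$ is replaced by $u(t)v^2(t)-(F+k)v(t)$. Under the hypothesis $u\in L^\infty(0,T;H^2\cap\HH)$ together with $\partial_t u\in L^2(0,T;\VV)$, the first equation of \eqref{eq:gs} holds as an identity in $L^2$ for a.e.\ $t$, so I may test the strong form against $\phi_h\in\VV_h$ and use it to eliminate the reaction contribution. Recalling that the homogeneous Neumann condition makes the boundary flux vanish, summation by parts shows that $(\nabla u(t^{n+1}),\nabla\phi_h)_h$ is a finite-volume approximation of $-(\Delta u(t^{n+1}),\phi_h)_{L^2}$. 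Substituting all this into the definition of $R_h^n$ would recast it as $R_h^n(\phi_h)=T_1(\phi_h)+T_2(\phi_h)+T_3(\phi_h)$, where $T_1$ gathers the temporal defect $\big(\tfrac{u(t^{n+1})-u(t^n)}{\Delta t}-\partial_t u(t^{n+1}),\phi_h\big)$, $T_2$ the explicit reaction lag $\big(R(t^{n+1})-R(t^n),\phi_h\big)$, and $T_3$ the purely spatial discrepancy between the discrete forms $(\cdot,\cdot)_h$, $(\nabla\cdot,\nabla\cdot)_h$ and their continuous counterparts.

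I would first dispose of the two temporal contributions, both of order $\Delta t$. For $T_1$, an integral Taylor remainder gives $\tfrac{u(t^{n+1})-u(t^n)}{\Delta t}-\partial_t u(t^{n+1})=-\tfrac{1}{\Delta t}\int_{t^n}^{t^{n+1}}(s-t^n)\,\partial_{tt}u(s)\dd s$, whose $\VV'$-norm is controlled by $\Delta t\,\Vert\partial_{tt}u\Vert_{L^\infty(0,T;\VV')}$; pairing with $\phi_h$ costs one factor of the discrete $\VV$-norm and yields $O(\Delta t)$. For $T_2$, the a priori bounds $0\le u\le 1$ and $u,v\in L^\infty(0,T;L^\infty)$ recalled in \cref{subsec:weak_formulation}, combined with $\partial_t u,\partial_t v\in L^2(0,T;\VV)$, make $R$ Lipschitz in time, so $\Vert R(t^{n+1})-R(t^n)\Vert\le C\int_{t^n}^{t^{n+1}}\big(\Vert\partial_t u(s)\Vert+\Vert\partial_t v(s)\Vert\big)\dd s$; measured in the time-accumulated norm in which the residual later enters the Gronwall argument of \cref{thm:error_estimate}, this is again $O(\Delta t)$.

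The spatial term $T_3$ is where the order $h^2$ must be produced, and it splits into a quadrature part (replacing $(\cdot,\cdot)_h$ by the $L^2$ inner product in the zeroth-order and reaction terms) and a flux-consistency part (the defect of $(\nabla u,\nabla\phi_h)_h$ against $-(\Delta u,\phi_h)_{L^2}$). For the quadrature part I would exploit that the cellwise-average interpolant $\mI_h$ is exactly the $L^2$-orthogonal projection onto $\HH_h$: since $\phi_h$ is piecewise constant, $(w-\mI_h w,\phi_h)_{L^2}=0$, so only the midpoint discrepancy between $(w,\phi_h)_h$ and $(\mI_h w,\phi_h)_{L^2}$ survives, and a Bramble--Hilbert estimate (\cref{prop:bramble_hilbert_lemma}) applied to the midpoint functional — which annihilates affine functions — gives $O(h^2)$ for $w\in H^2$. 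For the flux part, on the uniform Cartesian mesh one has $\tau_\sigma=1$ and the two-point difference $u_K-u_L$ is a centered difference about the face midpoint; writing the per-face defect against $\int_\sigma\partial_n u\dd S$ as a bounded functional that vanishes on quadratics and invoking \cref{prop:bramble_hilbert_lemma} with the sharp exponent, then summing over faces and pairing with $\phi_h$, again yields $O(h^2)$.

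The main obstacle is precisely this second-order spatial rate: the naive interpolation bound of \cref{lem:approx_pty} is only $O(h)$, so the gain to $O(h^2)$ cannot come from regularity alone but must be extracted from two structural cancellations special to the uniform Cartesian mesh with cell-centered points — the $L^2$-orthogonality of the averaging projection in the zeroth-order terms, and the centered-difference (symmetric) character of the two-point flux in the diffusion term. The delicate bookkeeping is to phrase each consistency defect as a bounded linear functional vanishing on the largest admissible polynomial space and then apply \cref{prop:bramble_hilbert_lemma} with the correct exponent, so that the summed, $\phi_h$-paired error is genuinely $O(h^2)$ in the dual norm. A secondary technical point is that $\phi_h\in\VV_h$ is only piecewise constant, hence not in $\VV$, so the pairing of the $\VV'$-valued remainder $\partial_{tt}u$ in $T_1$ with $\phi_h$ must be read through the discrete $\VV_h$-norm; this is routine but has to be handled consistently to keep the bound of true dual-norm type.
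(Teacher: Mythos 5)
Your route is genuinely different from the paper's: you insert the continuous equation at time $t^{n+1}$, which makes the explicit-in-time \emph{reaction lag} $\big(R(t^{n+1})-R(t^n),\phi_h\big)$ appear as a separate term, and you treat the diffusion term as a true finite-volume flux-consistency defect rather than a quadrature error. Both instincts are sound --- in fact the paper's own three-term splitting is not an identity (after inserting the weak form at $t^{n+1}$, the discrepancy between $T_1+T_2+T_3$ and $R_h^n$ is precisely the reaction lag you isolate), and the paper produces its $h^2$ factors by citing \cref{lem:approx_pty}, which is only a first-order estimate in norm. However, two of your steps do not deliver what the lemma actually claims, namely a \emph{uniform-in-$n$} bound $\max_{0\le n\le N}\Vert R_h^n\Vert_{H^{-1}}\le C(h^2+\Delta t)$. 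For the reaction lag, your estimate $\Vert R(t^{n+1})-R(t^n)\Vert\le C\int_{t^n}^{t^{n+1}}\big(\Vert\partial_t u\Vert+\Vert\partial_t v\Vert\big)\dd s$ combined with $\partial_t u,\partial_t v\in L^2(0,T;\VV)$ gives, by Cauchy--Schwarz, only $O(\sqrt{\Delta t})$ for a fixed $n$; your fallback to the ``time-accumulated norm used in the Gronwall argument'' proves a statement that suffices for \cref{thm:error_estimate} but is not the lemma. To prove the lemma as stated you must spend the hypothesis you left unused here, $\partial_{tt}u,\partial_{tt}v\in L^\infty(0,T;\VV')$: it upgrades $\partial_t u,\partial_t v$ to $L^\infty(0,T;\VV')$, and since in two dimensions $H^2$ multiplies $H^{-1}$ (i.e.\ $\Vert g w\Vert_{H^{-1}}\le C\Vert g\Vert_{H^2}\Vert w\Vert_{H^{-1}}$), the formula $\partial_t R=-(v^2+F)\,\partial_t u-2uv\,\partial_t v$ together with $u,v\in L^\infty(0,T;H^2)$ yields $\Vert R(t^{n+1})-R(t^n)\Vert_{H^{-1}}\le C\Delta t$ uniformly in $n$.

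The second gap is in the flux-consistency part. Your per-face defect $E_\sigma(u)=\int_\sigma \partial_n u\dd S-\vert\sigma\vert\,(u_L-u_K)/h$ does vanish on quadratics, but Bramble--Hilbert converts vanishing on $\mathbb{P}_2$ into an extra power of $h$ only when the error can be measured against the $H^3$-seminorm, i.e.\ only for $u(t)\in H^3$. Under the stated hypothesis $u\in L^\infty(0,T;H^2)$, the scaling argument gives only $\vert E_\sigma(u)\vert\le Ch\,\vert u\vert_{H^2(K\cup L)}$ per face, and after summing against $\phi_K-\phi_L$ the diffusion consistency term is $O(h)$ times the discrete $H^1$-norm of $\phi_h$, not $O(h^2)$. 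So the ``sharp exponent'' you invoke is not available at the assumed regularity; rescuing second order requires either $H^3$ spatial regularity or a supraconvergence argument that exploits cancellation of the per-face truncation errors across the uniform grid (an additional summation by parts), neither of which appears in your sketch. By contrast, your zeroth-order quadrature argument (orthogonality of the cell-average projection plus the midpoint functional annihilating affine functions) is correct and genuinely gives $O(h^2)$ for $H^2$ data; the difficulty is confined to the flux term --- and it equally afflicts the paper's own proof, where it is hidden behind the citation of \cref{lem:approx_pty}.
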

	\begin{proof}
		We give the proof for $R_h^n$; the case of $Q_h^n$ follows analogously.
		Let $\phi_h\in \VV$. To estimate the action of $R_h^n$ on $\phi_h$, we insert the weak form and subtract it, then bound the dual norm. We split the residual into 3 terms,
		\begin{equation*}
			R_h^n(\phi_h) = T_1 + T_2 + T_3,
		\end{equation*}
		with  
		\begin{align*}
			\intertext{the \textit{time truncation}}
			T_1&\coloneqq \Big( \frac{u(t^{n+1}) - u(t^n)}{\Delta t} - \partial_t u(t^{n+1}),\, \phi_h \Big)_h  +  \big(\partial_t u(t^{n+1}),\, \phi_h\big)_h - \big\la\partial_t u(t^{n+1}),\, \phi_h \big\ra,
			\\
			\intertext{  the \textit{gradient quadrature error}}
			T_2&\coloneqq d_u \big( \nabla u(t^{n+1}), \,\nabla \phi_h \big)_h- d_u \big( \nabla u(t^{n+1}), \,\nabla \phi_h \big),
			\vphantom{\Big( \frac{u(t^{n+1}) - u(t^n)}{\Delta t} - \partial_t u(t^{n+1}),\, \phi_h \Big)_h}
			\\
			\intertext{and the \textit{nonlinear quadrature error}}
			T_3&\coloneqq \big( f(u(t^n),v(t^n)), \, \phi_h \big)_h - \big(f(u(t^n),v(t^n)),\, \phi_h \big).
			\vphantom{\Big( \frac{u(t^{n+1}) - u(t^n)}{\Delta t} - \partial_t u(t^{n+1}), \,\phi_h \Big)_h}
		\end{align*}

		\textit{Time truncation.} 
		Using Taylor expansion,
		\begin{equation*}
			\frac{u(t^{n+1}) - u(t^n)}{\Delta t} - \partial_t u(t^{n+1})  = \frac{\Delta t}{2}\partial_{tt}u(\xi),\quad\mbox{ for some } \xi\in (t^{n}, t^{n+1}),
		\end{equation*}
		which gives for the first term
		\begin{equation*}
			\bigg\Vert \frac{u(t^{n+1}) - u(t^n)}{\Delta t} - \partial_t u(t^{n+1}) \bigg\Vert_{H^{-1}}\leq C\Vert \partial_{tt}u\Vert_{L^\infty(t^n,t^{n+1}; H^{-1})}.
		\end{equation*}
		We use \cref{lem:approx_pty} on the second term (quadrature error for $L^2$-inner product), which gives
		\begin{equation*}
			\big\vert  \big(\partial_t u(t^{n+1}),\, \phi_h\big)_h - \big\la\partial_t u(t^{n+1}),\, \phi_h \big\ra\big\vert \leq Ch^2\big\Vert \partial_t u(t^{n+1})\big\Vert_{H^1}\Vert \phi_h\Vert_{H^1}.
		\end{equation*}
		Thus, the first contribution is
		\begin{equation*}
			\vert T_1\vert\leq C(\Delta t + h^2) \Vert \phi_h\Vert_{H^1}.
		\end{equation*}
		
		\textit{Gradient quadrature error.} Since $\nabla u(t^{n+1})\in H^1\times H^1$, and  we use midpoint rule (or piecewise constant projection), by \cref{lem:approx_pty}, the quadrature error satisfies
		\begin{equation*}
			\big\vert   \big( \nabla u(t^{n+1}), \nabla \phi_h \big)_h-   \big( \nabla u(t^{n+1}), \nabla \phi_h \big)\big\vert \leq C h^2\Vert \nabla u(t^{n+1})\Vert_{H^{1}}\Vert \phi_h\Vert_{H^1}.
		\end{equation*}
		Thus, 
		\begin{equation*}
			\vert T_2\vert\leq Ch^2\Vert \phi_h\Vert_{H^{1}}.
		\end{equation*}
		
		\textit{Nonlinear quadrature error.} The reaction term $f(u,v) = -uv^2 + F(1-u)\in H^1$ if $u,v\in H^2$. Similarly, by applying \cref{lem:approx_pty}, we have
		\begin{equation*}
			\big\vert  \big(f(u(t^n),v(t^n)), \, \phi_h \big)_h - \big(f(u(t^n),v(t^n), \, \phi_h\big)\big\vert \leq Ch^2\Vert f(u,v)\Vert_{H^1}\Vert \phi_h\Vert_{H^{1}}.
		\end{equation*}
		Thus, 
		\begin{equation*}
			\vert T_3\vert \leq Ch^2\Vert \phi_h\Vert_{H^{1}}.
		\end{equation*}
		
		Combining the estimates, we arrive at
		\begin{equation*}
			\vert R_h^n(\phi_h)\vert \leq C(\Delta t + h^2)\Vert \phi_h\Vert_{H^{1}},
		\end{equation*}
		thus, by definition of the dual norm
		\begin{equation*}
			\Vert R_h^n(\phi_h)\Vert_{H^{-1}}\leq C(h^2 + \Delta t),
		\end{equation*}
		where $C>0$ is a constant depends on the norms of $u$, but not on the scheme.
		
		That completes the proof of the lemma.
	\end{proof}
	\begin{proof}[Proof of \cref{thm:error_estimate}]\label{proof:error_estimate}
			We define the error terms:
		\[
		\delta_h^n := u_h^n - \mI_h u(t^n), \qquad \eta_h^n := v_h^n - \mI_h v(t^n).
		\]
		We subtract the 2 systems of equations to get
		\begin{alignat*}{11}
				&\big\la   \delta_h^{n+1} - & \delta_h^{n},\,& \phi_h \big\ra + \Delta t\, d_u \big(\nabla \delta_h^{n+1},\, &\nabla \phi_h \big) &={}&    \Delta t\, \big(f(u_h^h,v_h^n)- &f(\mI_h u(t^n),&\,\mI_h v(t^n)),\, &\phi_h\big) &+& R_h^n(\phi_h),
				\\[5pt]
				&\big\la \eta_h^{n+1} -& \eta_h^{n},\,& \psi_h \big\ra +\Delta t\,  d_v  \big(\nabla \eta_h^{n+1},\, &\nabla \psi_h\big) 
				&={}& \Delta t\, \big(g(u_h^h,v_h^n)- &g(\mI_h u(t^n),&\,\mI_hv(t^n)), \,&\psi_h\big) &+& Q_h^n(\psi_h),
		\end{alignat*}
		where we define $f(u,v)\coloneqq -uv^2 + F(1-u)$ and $g(u,v)\coloneqq uv^2 - (F+k)v$.

		We test with $\phi_h = 2\delta_h^{n+1}$ and $\psi_h = 2\eta_h^{n+1}$; and use the identity $
		(a - b) 2b = a^2 - b^2 + (a-b)^2$,
		\begin{align*}
			\begin{split}
				\Vert \delta_h^{n+1}\Vert_{L^2}^2 -  \Vert \delta_h^{n}\Vert_{L^2}^2 &+\Vert \delta_h^{n+1} - \delta_h^{n}\Vert_{L^2}^2 + \Delta t\, d_u \Vert \nabla \delta_h^{n+1}\Vert_{L^2}^2
				\\ 
				&=    2\Delta t \,\big(f(u_h^h,v_h^n)- f(\mI_h u(t^n),\mI_h v(t^n)),\, \delta_h^{n+1}\big) + R_h^n(2\delta_h^{n+1}),
			\end{split}  
			\\[5pt]
			\begin{split}
				\Vert \eta_h^{n+1}\Vert_{L^2}^2 - \Vert \eta_h^{n}\Vert_{L^2}^2 &+	\Vert \eta_h^{n+1} - \eta_h^{n}\Vert_{L^2}^2+\Delta t \, d_v  \Vert \nabla \eta_h^{n+1}\Vert_{L^2}^2 
				\\
				&= 2\Delta t\, \big(g(u_h^h,v_h^n)- g(\mI_h u(t^n),\mI_h v(t^n)),\, \eta_h^{n+1}\big) + Q_h^n(2\eta_h^{n+1}).
			\end{split}
		\end{align*}
		We can write 
		\begin{alignat*}{3}
			&\big\vert \big(f(u_h^h,v_h^n)- &f(\mI_h u(t^n),\mI_h v(t^n)), \, \delta_h^{n+1}\big)_h\big\vert &\leq C\big( \Vert \delta_h^{n}\Vert_{L^2}^2 + \Vert \eta_h^{n}\Vert_{L^2}^2 + \tfrac12\Vert \delta_h^{n+1}\Vert_{L^2}^2\big),
			\\
			&\big\vert (g(u_h^h,v_h^n)- &g(\mI_h u(t^n),\mI_h v(t^n)), \,\eta_h^{n+1}\big)_h\big\vert &\leq C\big( \Vert \delta_h^{n}\Vert_{L^2}^2 + \Vert \eta_h^{n}\Vert_{L^2}^2 + \tfrac12\Vert \eta_h^{n+1}\Vert_{L^2}^2\big).
		\end{alignat*}
		Next, we apply Cauchy--Schwarz and Young inequalities on the residuals; and use Lemma~\ref{lem:residuals},
		\begin{alignat*}{3}
			&R_h^n(2\delta_h^{n+1})&{}\leq{} &2\Vert R_h^n\Vert_{H^{-1}}\Vert \delta_h^{n+1}\Vert_{H^{1}}&{}\leq {}&C(h + \Delta t)^2 + \varepsilon \Vert \delta_h^{n+1}\Vert_{H^{1}}^2,
			\\
			&Q_h^n(2\eta_h^{n+1})&{}\leq {}&2\Vert Q_h^n\Vert_{H^{-1}}\Vert \eta_h^{n+1}\Vert_{H^{1}}&{}\leq {}&C(h + \Delta t)^2 + \varepsilon \Vert \eta_h^{n+1}\Vert_{H^{1}}^2.
		\end{alignat*}
		
		Combining everything, we arrive at
		\begin{alignat*}{3}
				&\Vert \delta_h^{n+1}\Vert_{L^2}^2 -  \Vert \delta_h^{n}\Vert_{L^2}^2 + \Delta t\, d_u \Vert \nabla \delta_h^{n+1}\Vert_{L^2}^2 &\leq    C\Delta t\big( \Vert \delta_h^{n}\Vert_{L^2}^2 + \Vert \eta_h^{n}\Vert_{L^2}^2 \big) + C(h^2 + \Delta t)^2,
			\\
				&\Vert \eta_h^{n+1}\Vert_{L^2}^2 - \Vert \eta_h^{n}\Vert_{L^2}^2 + \Delta t \,d_v \Vert \nabla \eta_h^{n+1}\Vert_{L^2}^2 &\leq   C\Delta t\big( \Vert \delta_h^{n}\Vert_{L^2}^2 + \Vert \eta_h^{n}\Vert_{L^2}^2 \big) + C(h^2 + \Delta t)^2.
			\end{alignat*}
		We define $\mathcal{E}^n\coloneqq \Vert \delta_h^{n}\Vert_{L^2}^2 + \Vert \eta_h^{n}\Vert_{L^2}^2$, and obtain a recurrence
		\begin{align*}
			\mathcal{E}^{n+1}\leq (1 + C\Delta t)\mathcal{E}^n + C(h^2 + \Delta t)^2,
		\end{align*}
		which by the discrete Gronwall lemma, provides
		\begin{equation*}
			\max_{0\leq n\leq N} \Big(\Vert u_h^n - u(t^n) \Vert_{L^2} + \Vert v_h^n - v(t^n) \Vert_{L^2}\Big)\leq C(h^2 + \Delta t)
		\end{equation*}
		as claimed by \cref{thm:error_estimate}.
	\end{proof}

	%
	%
	\section{Numerical experiments}\label{sec:numerics}
	
		\begin{figure}[t]
		\begin{subfigure}[c]{1\textwidth}
			\centering
			\makebox[.1\textwidth][l]{}
			\hfill
			\makebox[.2\textwidth][l]{\large $100$ tu}
			\hfill
			\makebox[.2\textwidth][l]{\large $500$ tu}
			\hfill
			\makebox[.2\textwidth][l]{\large $1000$ tu}
			\hfill
			\makebox[.2\textwidth][l]{\large $2000$ tu}
			\par\smallskip
			
			\begin{minipage}[t]{0.02\textwidth}
				\rotatebox{90}{\textbf{Labyrinthine}}
			\end{minipage}
			\includegraphics[scale = 0.4]{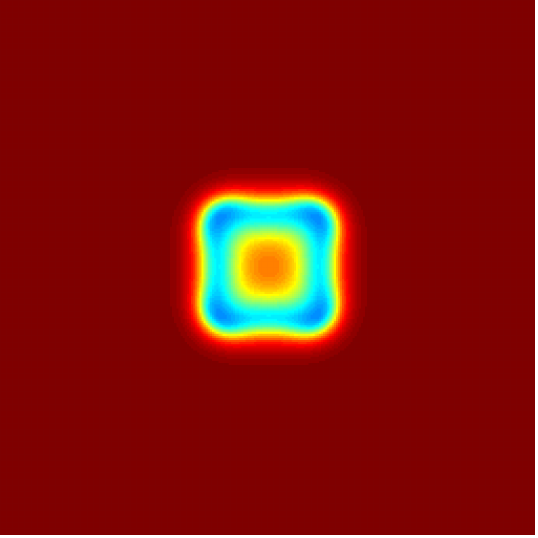}
			\includegraphics[scale = 0.4]{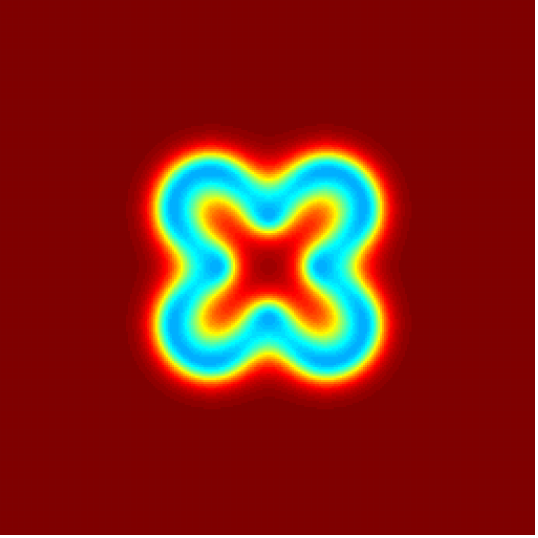}
			\includegraphics[scale = 0.4]{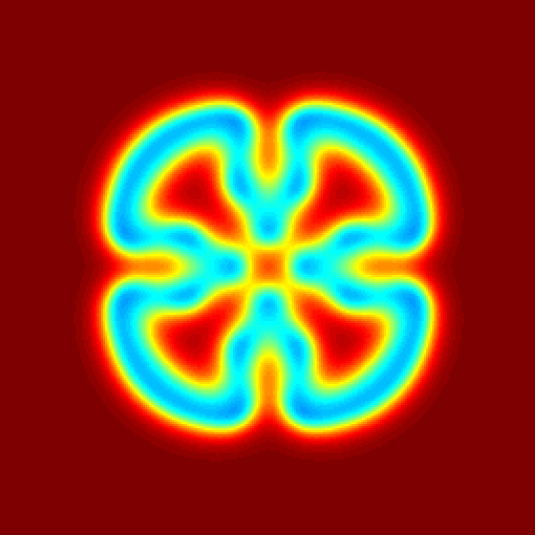}
			\includegraphics[scale = 0.4]{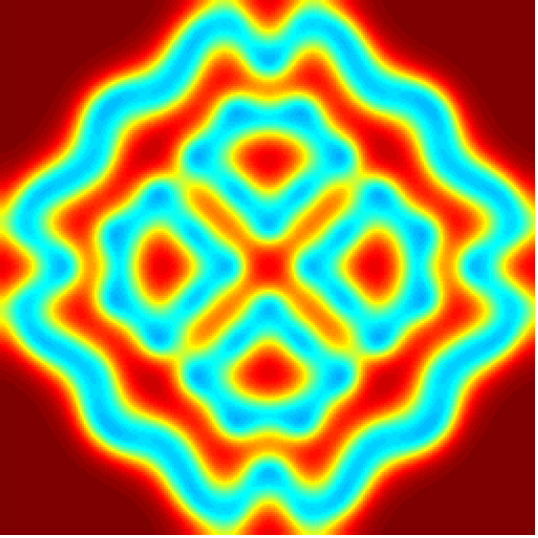}
		\end{subfigure}
		\medskip 
		
		\hfill
		\begin{subfigure}[c]{1\textwidth}
			\centering
			\begin{minipage}[t]{0.02\textwidth}
				\rotatebox{90}{\textbf{Moving spots}}
			\end{minipage}
			\includegraphics[scale = 0.4]{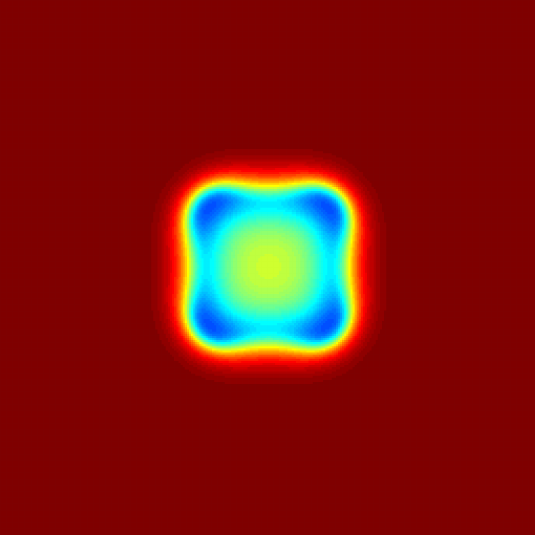}
			\includegraphics[scale = 0.4]{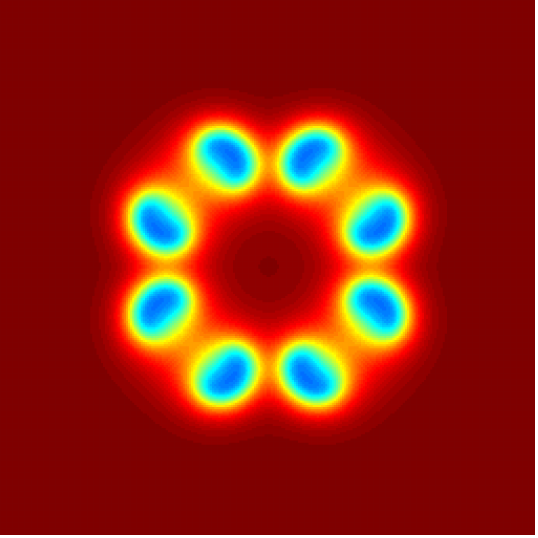}
			\includegraphics[scale = 0.4]{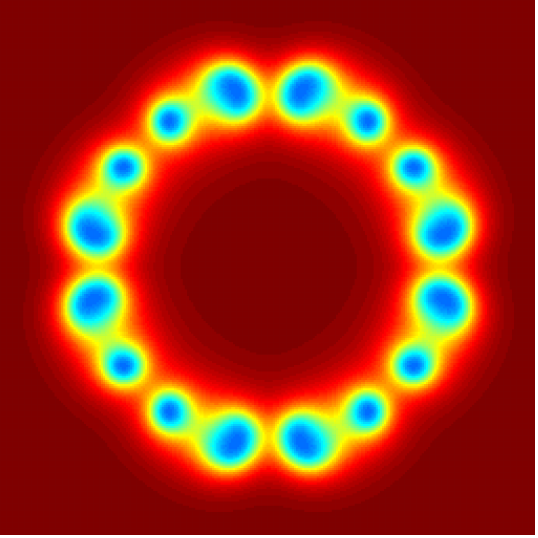}
			\includegraphics[scale = 0.4]{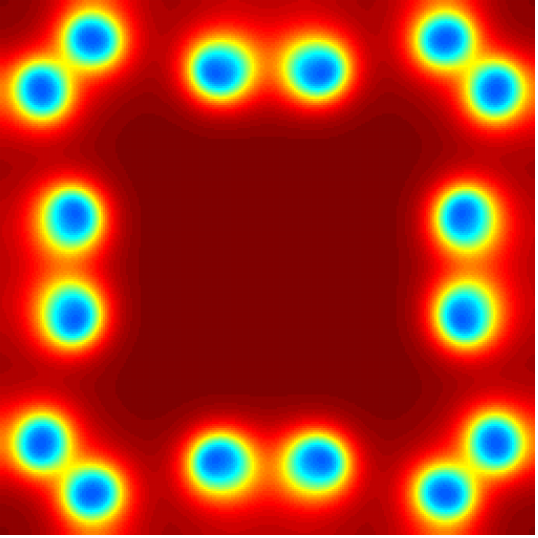}
		\end{subfigure}
		\medskip
		
		\hfill
		\begin{subfigure}[c]{1\textwidth}
			\centering
			\begin{minipage}[t]{0.02\textwidth}
				\rotatebox{90}{\textbf{Pulsating spots}}
			\end{minipage}
			\includegraphics[scale = 0.4]{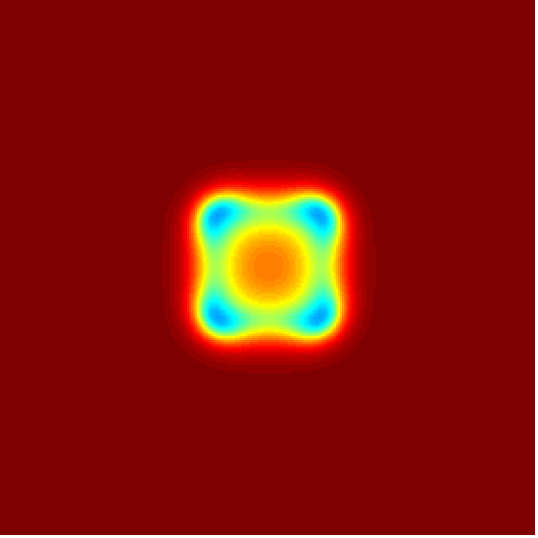}
			\includegraphics[scale = 0.4]{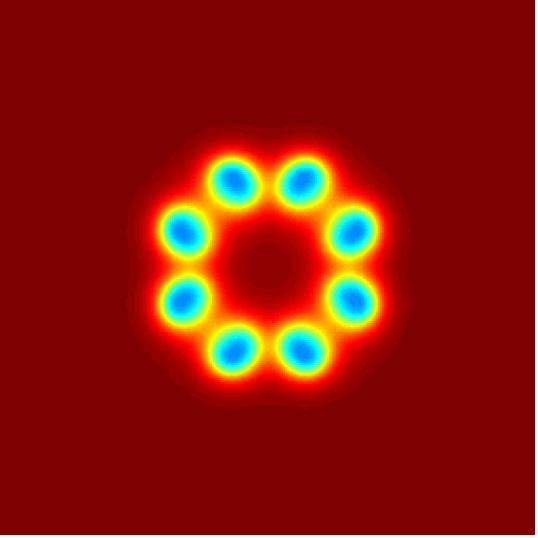}
			\includegraphics[scale = 0.4]{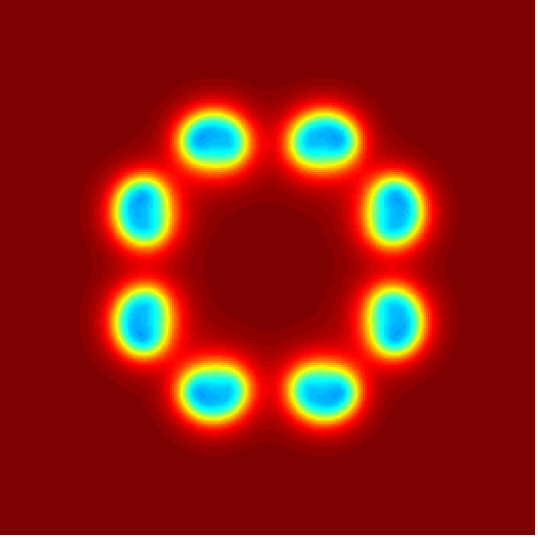}
			\includegraphics[scale = 0.4]{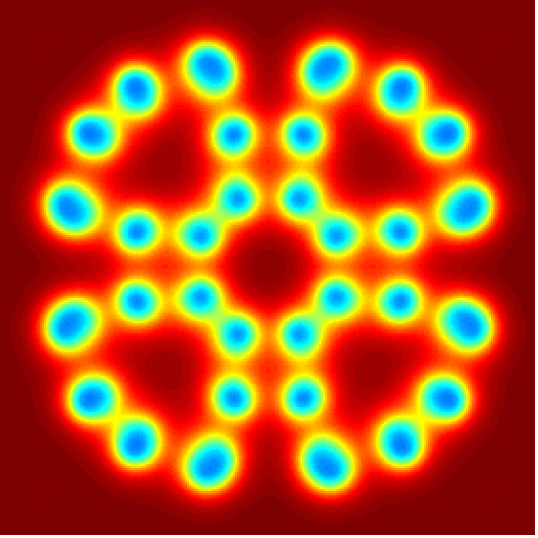}
		\end{subfigure}
		\medskip
		
		\hfill	
		\begin{subfigure}[c]{1\textwidth}
			\centering
			\includegraphics[scale = 1]{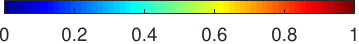}
		\end{subfigure}
		\caption{Snapshots of Gray--Scott patterns: {Labyrinthine} ($F = 0.037$, $k = 0.060$), {Moving Spots} ($F = 0.014$, $k = 0.054$), and {Pulsating Spots} ($F = 0.025$, $k = 0.060$). The system starts with $u_0 = 1$ and $v_0 = 0$ except in the center, in a box of size 0.2x0.2, where we prescribe $u_0 = 0.5$ and $v_0 = 0.25$. Colormap shows concentration $u$.}
		\label{fig:uref}
	\end{figure}
	
	In this section, we present numerical simulations that illustrate the effectiveness of the numerical scheme under consideration and validate the theoretical results on convergence.
	
	We consider the Gray--Scott system on a square domain $D = [0, 1]\times [0, 1]$. We fix the diffusion coefficients set to $d_u = 1.6\times 10^{-5}$ and $d_v = d_u/2$. 
	
	We apply \cref{algo:scheme} with a grid spacing $h$, yielding $nx = ny = 1/h$ points per spatial direction. The implementation of the numerical scheme is based on the finite volume package FiPy, see \cite{fipy}, and a semi-implicit scheme with a time step $\Delta t$.
	
	Since the Gray--Scott system does not admit a known closed-form solution, direct error evaluation against an exact solution is not possible. To assess the convergence properties of our method, there are two approaches:
	
	\textit{Reference solution approach.} We compute a high-fidelity numerical solution on a very fine spatial mesh with a very small time step, which we treat as the ``exact'' reference solution for error estimation. However, the Gray--Scott system is well known for its slow dynamics: patterns emerge only gradually, and transients can appear very similar at earlier times (e.g., around $t=100$), while the system typically settles into its characteristic long-term structures only at much later times (e.g., $t \geq 2000$, see \cref{fig:uref}). Computing such high-fidelity solutions up to $t=2000$ or beyond is computationally demanding, which motivates the use of an alternative strategy.
	
	\textit{Manufactured solution approach.} We prescribe smooth artificial solutions $u$ and $v$, and modify the Gray--Scott equations by adding source terms so that the prescribed functions are exact solutions. This provides a controlled setting in which the numerical error can be measured directly against the known exact solution.

	\subsection{Manufactured solution approach}\label{subsec:manufactured_solution}
	
	We define the modified equations 
	\begin{equation} \label{eq:manufactured_gs}
		\left\{
		\begin{alignedat}{6}
			\partial_t u &= d_u \Delta u &&- uv^2 &&+ F(1 - u) && + S_u, 
			\\
			\partial_t v &= d_v \Delta v &&+ uv^2 &&- (F + k)v &&+ S_v.
		\end{alignedat}
		\right.
	\end{equation}
	We use the parameters of the Labyrinthine ($F = 0.037$, $k = 0.060$) and fix a terminal time $T = 10$.
	The manufactured forcing terms $S_u, S_v$ are chosen so that a prescribed pair $(u^*,v^*)$ is an exact solution to \eqref{eq:manufactured_gs}. In particular,
	\begin{alignat*}{7}
		&S_u &{}={} &\partial_t u^* &- d_u \Delta u^* &+ u^*(v^*)^2 &- F(1 - u^*),&
		\\
		&S_v &{}= {}&\partial_t v^* &- d_v \Delta v^* &- u^*(v^*)^2 &+ (F+k)v^*.&
	\end{alignat*}
	
	\subsection*{Experiment 1 (trigonometric solution)}
	Let $a\in (0,1)$, $\alpha = 2\pi$, and $\omega = 2\pi$. Define
	\begin{align*}
		u^*(t,x,y) &= 1 - a\cos(\alpha x)\cos(\alpha y)\cos(w t),\quad
		v^*(t,x,y) = \frac14 +\frac14\cos(\alpha x)\cos(\alpha y)\cos(w t),
	\end{align*}
	The associated source terms are given by
	
	\begin{alignat*}{4}
		S_u(t,x,y) ={}&\ a \,\omega \cos(\alpha x)\cos(\alpha y)\sin(\omega t)
		\ {-\ } 2a\,d_u\,\alpha^2\,\cos(\alpha x)\cos(\alpha y)\cos(\omega t)
		\\[-2pt]
		&\quad {}- \Big[\hphantom{-}\,F\big(1 - u^*(t,x,y)\big) - u^*(t,x,y)\,\big(v^*(t,x,y)\big)^2 \Big],
		\\[6pt]
		S_v(t,x,y) ={}&\ {-\ }\tfrac{1}{4}\,\omega \cos(\alpha x)\cos(\alpha y)\sin(\omega t)
		\ +\ \tfrac{1}{2}\,d_v\,\alpha^2\,\cos(\alpha x)\cos(\alpha y)\cos(\omega t)
		\\[-2pt]
		&\quad {}- \Big[ - (F+k)\,v^*(t,x,y) + u^*(t,x,y)\,\big(v^*(t,x,y)\big)^2\Big].
	\end{alignat*}
	
		\begin{figure}[t]
		\begin{minipage}[c]{0.02\textwidth}
			\centering
			\rotatebox{90}{ \textbf{Experiment 2}}
			\\
			\vspace{100pt}
			\rotatebox{90}{ \textbf{Experiment 1}}
		\end{minipage}%
		\begin{minipage}[c]{0.9\textwidth}
			\begin{subfigure}[t]{0.32\textwidth}
				\centering
				\caption*{\textbf{Stability}}
				\includegraphics[scale = 0.45]{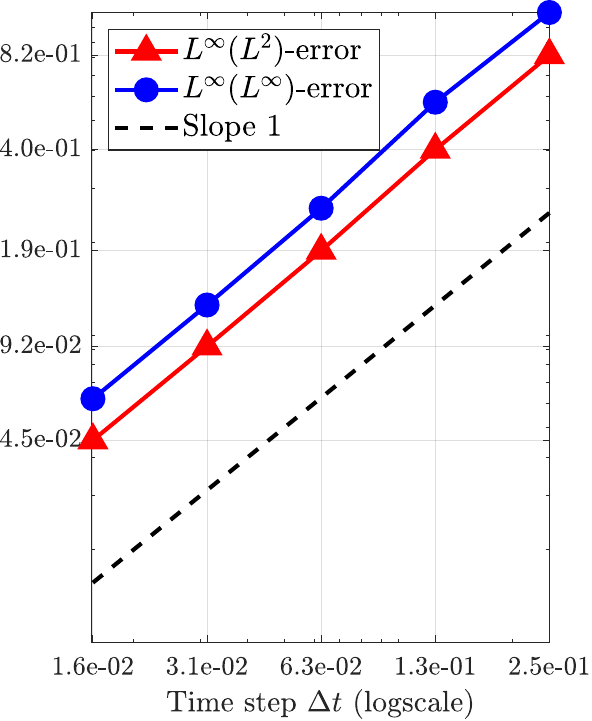}
			\end{subfigure}
			\begin{subfigure}[t]{0.32\textwidth}
				\centering
				\caption*{\textbf{Convergence}}
				\includegraphics[scale = 0.45]{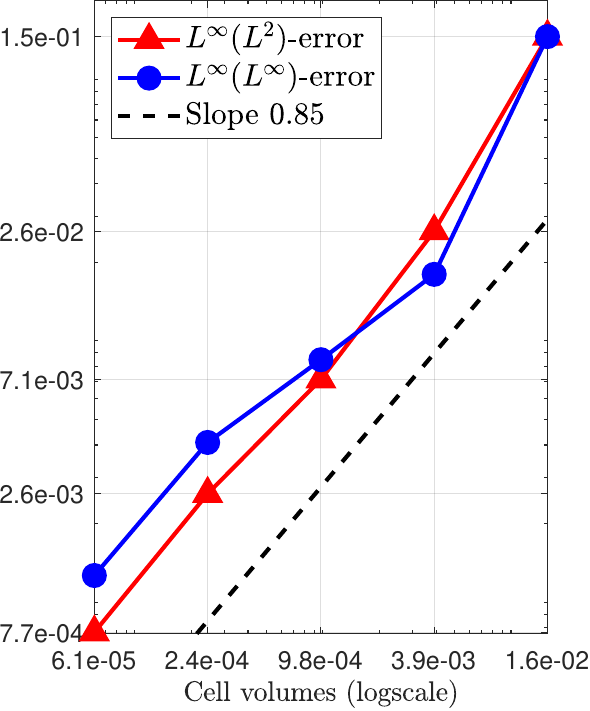}
			\end{subfigure}
			\begin{subfigure}[t]{0.32\textwidth}
				\centering
				\caption*{\textbf{Interface sensitivity}}
				\includegraphics[scale = 0.45]{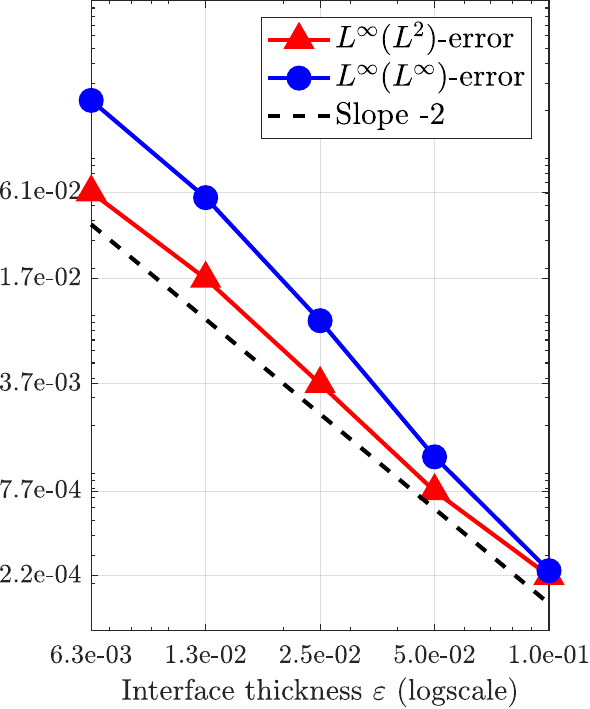}
			\end{subfigure}
			
			\begin{subfigure}[t]{0.32\textwidth}
				\centering
				\includegraphics[scale = 0.45]{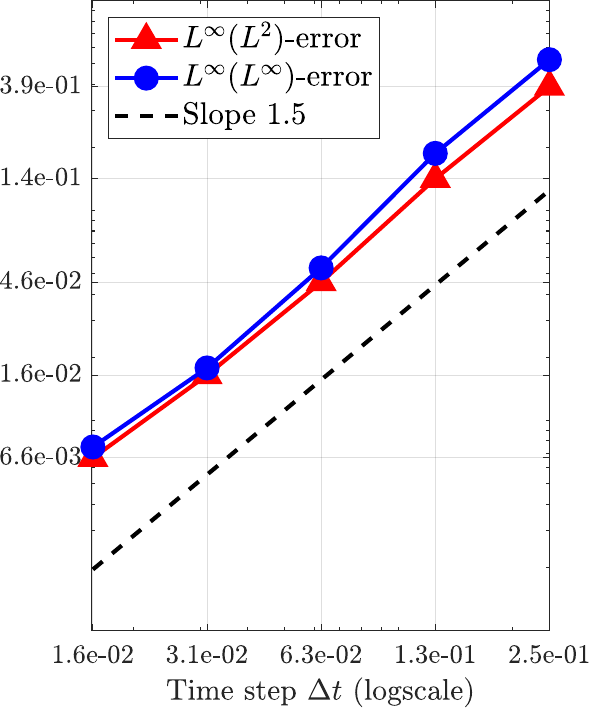}
			\end{subfigure}
			\begin{subfigure}[t]{0.32\textwidth}
				\centering
				\includegraphics[scale = 0.45]{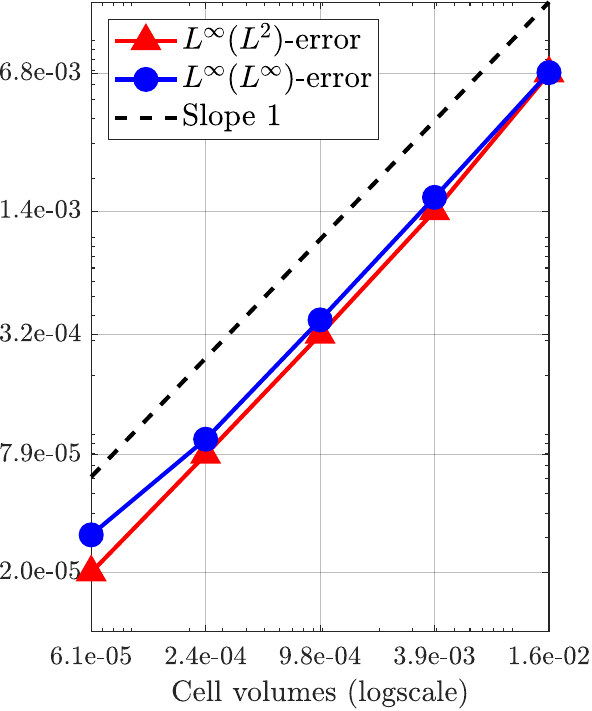}
			\end{subfigure}
			\phantom
			{%
				\begin{subfigure}[t]{0.32\textwidth}
					\centering					
					\includegraphics[scale = 0.4]{MMS2_interface.pdf}
				\end{subfigure}
			}
		\end{minipage}
		\caption{Stability, convergence, and interface sensitivity tests for Experiments 1 (top row) and 2 (bottom row). The	$L^\infty(L^2)$ and $L^\infty(L^\infty)$ errors are plotted against the cell volume $h^2$ (log--log scale), while interface sensitivity is measured with respect to the prescribed interface thickness $\varepsilon$. Reference slopes are included for comparison.}
		\label{fig:convergence}
	\end{figure}

	\subsection*{Experiment 2 (moving tanh interfacek)}
	Let $r(x, y) = \cos(2\pi (x-1/2)) + \cos(2\pi (y-1/2))$ and a time-dependent radius $r_0(t) = r_{00} + A\sin(\lambda t)$. For $\varepsilon >0$, define
	\begin{align*}
		u^*(t,x,y) = \frac12\bigg[1 + \tanh\bigg(\frac{s(t,x,y)}{\varepsilon}\bigg)\bigg],
		\quad
		v^*(t,x,y) = 1 - u^*(t,x,y).
	\end{align*}
	With $s(t,x,y) = r_0(t) - r(x,y)$. This solution has a moving discontinuity. By decreasing $\varepsilon$, the interface becomes sharper and approximates a true Heaviside jump while the derivatives (and thus sources) remain finite.	
	
	Define 
	\[
	\dot r_0(t) = A \lambda \cos(\lambda t), \quad\vert\nabla r\vert ^2 = \pi^2 \big( \sin^2(\pi x) + \sin^2(\pi y) \big),
	\quad
	\Delta r = -\pi^2 r.
	\]
	
	Plugging these into \eqref{eq:manufactured_gs} gives the associated source terms
	\begin{alignat*}{6}
		S_u(t,x,y)
		={}&&&\frac{1}{2\varepsilon} \sech^2(s)\,\dot r_0(t)
		\\
		-d_u\, \bigg[&&-&\frac{1}{\varepsilon^2}\sech^2(s)\tanh(s)\vert\nabla r\vert^2
		&&-\frac{1}{2\varepsilon}\sech^2(s)\Delta r
		&&\bigg]
		-\Big[&F\big(1-u^*\big)-&u^*(v^*)^2\Big],
		\\[5pt]
		S_v(t,x,y)
		 ={}&&-&\frac{1}{2\varepsilon} \sech^2(s)\, \dot r_0(t) 
		 \\
		 -d_v\,\bigg[&&&\frac{1}{\varepsilon^2}\sech^2(s)\tanh(s)\vert\nabla r\vert^2
		&&+\frac{1}{2\varepsilon}\sech^2(s)\Delta r&&\bigg]-\Big[-&(F+k)v^*+&u^*(v^*)^2\Big].
	\end{alignat*}
	
The $L^\infty(L^2)$- and $L^\infty(L^\infty)$-errors are computed and reported in \cref{fig:convergence}. In the convergence tests, we set $\Delta t = h^2$ (proportional to the cell volume) and gradually refined the mesh, measuring the error at the discrete times $t = 1,\ldots,10$. To assess stability, we fixed $h = 1/128$ and considered increasingly large time steps $\Delta t = k h$ with $k = 1,2,4,16,32,64$. The results confirm the unconditional stability of the scheme and verify the convergence rate of order $\approx 1$ in the $L^\infty(L^2)$ norm predicted by \cref{thm:error_estimate}. The same rate of convergence is also observed in the $L^\infty(L^\infty)$ norm for both experiments.

The interface sensitivity, relevant only for Experiment~2, exhibits second-order behavior. This demonstrates that the method remains robust even in the presence of sharp gradients induced by the moving interface.

Taken together, these results provide strong numerical evidence that the semi-implicit treatment of diffusion, combined with the explicit handling of nonlinear reactions, yields a stable and accurate discretization, fully consistent with the theoretical stability guarantees established in \Cref{sec:main_results}.

	%
	%
	\section{Conclusion and Outlook}\label{sec:conclusion}
	
	In this work, we have developed and analyzed a semi-implicit finite volume scheme for the Gray--Scott reaction-diffusion system. The scheme treats diffusion implicitly and nonlinear reaction terms explicitly, yielding a robust IMEX formulation that is well-suited for stiff dynamics. We established unconditional well-posedness of the discrete problem, together with qualitative properties such as non-negativity and boundedness of the numerical solution. By combining compactness arguments with weak--strong uniqueness, we proved that the fully discrete solution sequence converges strongly to a weak solution of the continuous system. Under additional smoothness assumptions, we derived a priori error estimates in the $L^2$-norm. Numerical experiments confirmed the theoretical results, illustrating both convergence of the method and the emergence of classical Gray--Scott pattern types.
	
	Several directions remain open for future research. An immediate extension is to apply the present analysis to other reaction-diffusion systems that exhibit rich pattern formation phenomena similar to the Gray--Scott model. Classical examples include the Schnakenberg system, the Gierer--Meinhardt activator--inhibitor model, and the FitzHugh--Nagumo system, all of which generate Turing-type patterns and have been extensively studied in mathematical biology. The analytical framework developed here, based on compactness arguments and weak--strong uniqueness, can be adapted to such systems under similar structural conditions on the nonlinearities. Beyond chemical kinetics, these models also arise in morphogenesis, neuroscience, and materials science, making them natural candidates for extending the present finite volume analysis.

	Overall, the present contribution demonstrates that finite volume schemes offer a mathematically rigorous and computationally effective tool for simulating the Gray--Scott system. The combination of provable convergence, error control, and pattern-resolving capability makes them a promising candidate for future studies in computational mathematics, applied sciences, and engineering contexts where pattern formation plays a central role.

\bibliographystyle{abbrv}

\begin{thebibliography}{10}
	
	\bibitem{amann1989dynamic}
	H.~Amann.
	\newblock Dynamic theory of quasilinear parabolic systems. {III}. {G}lobal
	existence.
	\newblock {\em Math. Z.}, 202(2):219--250, 1989.
	
	\bibitem{boscarino2016high}
	S.~Boscarino, F.~Filbet, and G.~Russo.
	\newblock High order semi-implicit schemes for time dependent partial
	differential equations.
	\newblock {\em J. Sci. Comput.}, 68(3):975--1001, 2016.
	
	\bibitem{brenner2008mathematical}
	S.~C. Brenner and L.~R. Scott.
	\newblock {\em The mathematical theory of finite element methods}, volume~15 of
	{\em Texts in Applied Mathematics}.
	\newblock Springer, New York, third edition, 2008.
	
	\bibitem{haggar2025a}
	M.~S. Daoussa~Haggar, K.~Mahamat~Malloum, J.~M. Fokam, and M.~Mbehou.
	\newblock A linearized time stepping scheme for finite elements applied to
	{G}ray--{S}cott model.
	\newblock {\em Comput. Math. Appl.}, 191:129--143, 2025.
	
	\bibitem{epstein1998introduction}
	I.~R. Epstein and J.~A. Pojman.
	\newblock {\em An introduction to nonlinear chemical dynamics: oscillations,
		waves, patterns, and chaos}.
	\newblock Oxford University Press, 11 1998.
	
	\bibitem{eymard2000finite}
	R.~Eymard, T.~Gallou\"et, and R.~Herbin.
	\newblock Finite volume methods.
	\newblock In {\em Handbook of numerical analysis, {V}ol. {VII}}, volume VII of
	{\em Handb. Numer. Anal.}, pages 713--1020. North-Holland, Amsterdam, 2000.
	
	\bibitem{gray1983autocatalytic}
	P.~Gray and S.~Scott.
	\newblock Autocatalytic reactions in the isothermal, continuous stirred tank
	reactor: Isolas and other forms of multistability.
	\newblock {\em Chemical Engineering Science}, 38(1):29--43, 1983.
	
	\bibitem{fipy}
	J.~E. Guyer, D.~Wheeler, and J.~A. Warren.
	\newblock {FiPy}: Partial differential equations with {P}ython.
	\newblock {\em Comput. Sci. Eng.}, 11(3):6--15, 2009.
	
	\bibitem{hansen2015a}
	E.~Hansen and E.~Henningsson.
	\newblock A convergence analysis of the {P}eaceman--{R}achford scheme for
	semilinear evolution equations.
	\newblock {\em SIAM J. Numer. Anal.}, 51(4):1900--1910, 2013.
	
	\bibitem{henry1981geometric}
	D.~Henry.
	\newblock {\em Geometric theory of semilinear parabolic equations}, volume 840
	of {\em Lecture Notes in Mathematics}.
	\newblock Springer-Verlag, Berlin-New York, 1981.
	
	\bibitem{pearson1993complex}
	J.~E. Pearson.
	\newblock Complex patterns in a simple system.
	\newblock {\em Science}, 261(5118):189--192, 1993.
	
	\bibitem{pierre2010global}
	M.~Pierre.
	\newblock Global existence in reaction-diffusion systems with control of mass:
	a survey.
	\newblock {\em Milan J. Math.}, 78(2):417--455, 2010.
	
	\bibitem{souplet2006global}
	P.~Souplet and Q.~S. Zhang.
	\newblock Global solutions of inhomogeneous {H}amilton--{J}acobi equations.
	\newblock {\em J. Anal. Math.}, 99:355--396, 2006.
	
	\bibitem{zhang2008second}
	K.~Zhang, J.~C.-F. Wong, and R.~Zhang.
	\newblock Second-order implicit-explicit scheme for the {G}ray--{S}cott model.
	\newblock {\em J. Comput. Appl. Math.}, 213(2):559--581, 2008.
	
\end{thebibliography}

\end{document}